\documentclass[12pt]{amsart}
\usepackage{preamble}
\usepackage{tabularx}
\title{Length-Four Pattern Avoidance in Inversion Sequences}
\author{Letong Hong and Rupert Li}
\date{\today}

\begin{document}
\begin{abstract}
Inversion sequences of length $n$ are integer sequences $e_1,\ldots ,e_n$ with $0\le e_i<i$ for all $i$, which are in bijection with the permutations of length $n$.
In this paper, we classify all Wilf equivalence classes of pattern-avoiding inversion sequences of length-4 patterns except for one case (whether 3012 $\equiv$ 3201) and enumerate some of the length-4 pattern-avoiding inversion sequences that are in the OEIS.
\end{abstract}
\maketitle

\section{Introduction}\label{section: introduction}
Pattern avoidance for permutations is a robust and well-established branch of enumerative combinatorics.
We refer readers to Stanley \cite{stanley2010survey} for an overview of this field, and to Simion and Schmidt \cite{simion1985restricted} in 1985 for the first systematic study of pattern avoidance on permutations.
Classical pattern avoidance represents permutations using one-line notation $\pi=\pi_1\cdots\pi_n$; an alternative representation for permutations is using \emph{inversion sequences} $e=e_1\cdots e_n$, sequences of integers such that $0 \leq e_i < i$ for all $i$.
Inversion sequences are in natural bijection with permutations via the well-known Lehmer code \cite{lehmer1960teaching}, an example of an inversion table: one can biject an inversion sequence $e$ to a permutation $\pi$ via ensuring that for each $i$, there exist $e_i$ values $j<i$ such that $\pi_j > \pi_i$.
Inversion sequences have been studied in many contexts and fields, not just pattern avoidance; for example, see Savage and Schuster \cite{savage2012ehrhart}. 

The study of pattern avoidance on inversion sequences was concurrently initiated by Mansour and Shattuck \cite{MansourShattuck} in 2015 and Corteel, Martinez, Savage, and Weselcouch \cite{corteel2016patterns} in 2016.
The former obtained the explicit number and/or generating function of inversion sequences avoiding any element of $S_3$; the latter further enumerated the number of pattern-avoiding sequences for all patterns of length 3 and related these quantities to well-known combinatorial sequences including the Bell numbers, Euler up/down numbers, Fibonacci numbers, and Schr\"oder numbers.
For patterns of length 4, Chern \cite{Chern} proved the exact formula for 0012-avoiding inversions, answering a conjecture by Lin and Ma (see the end of \cite{lin2020inversion}) in 2020.
However, the enumeration, or even the determination of the Wilf equivalence classes, for all other patterns of length 4 remains open.
For simultaneous avoidance of multiple patterns, Lin and Yan \cite{LinYan} in 2020 studied inversion sequences avoiding certain combinations of two length-3 patterns by establishing correspondences with objects enumerated by the Bell numbers, Fishburn numbers, powered Catalan numbers, semi-Baxter numbers, and 3-noncrossing partitions. In 2018,
Martinez and Savage \cite{Martinez2018PatternsII} rephrased and generalized the question, investigating the avoidance of triples of binary relations, that is, no simultaneous appearances $e_iR_1e_j$, $e_jR_2e_k$, and $e_iR_3e_k$ are allowed to appear with $i<j<k$ for some given $R_1,R_2,R_3\in\{<,>,\le,\ge,=,\neq,-\}$.
On the other hand, in 2019 Auli and Elizalde \cite{AuliElizalde} enumerated the length-3 consecutive pattern-avoiding inversion sequences as well as classified consecutive patterns up to length 4 according to the corresponding Wilf equivalence relations.
In a following 2021 paper \cite{AuliElizaldeWilf}, the same authors gave a complete list of generalized Wilf equivalences between hybrid vincular patterns of length 3, completing the classification of Wilf equivalence classes for all vincular patterns of length 3.
They further built on Martinez and Savage's framework and extended the enumeration to inversion sequences avoiding $e_iR_1e_{i+1}R_2e_{i+2}$ configurations \cite{AuliElizaldeII} in 2019.

Our main result classifies all Wilf equivalence classes for length-4 patterns, except one unresolved case of 3012 possibly belonging to the last class, as demarcated below by a question mark.
A computer search for lengths $n \leq 10$ demonstrates that no other Wilf equivalences are possible: in particular, 2001 agrees with the second Wilf equivalence class $2110\equiv2101\equiv2011$ for all lengths $n\leq 9$, but diverges at $n=10$.
\begin{equation}\label{eq:wilf}
    \begin{gathered}
        1011 \equiv 1101 \equiv 1110 \\
        2110 \equiv 2101 \equiv 2011 \\
        0221 \equiv 0212 \\
        0312 \equiv 0321 \\
        1102 \equiv 1012 \\
        2201 \equiv 2210 \\
        2301 \equiv 2310 \\
        3201 \equiv 3210 \overset{?}{\equiv} 3012.
    \end{gathered}
\end{equation}
\begin{theorem}\label{theorem:main_classification}
Length-4 patterns satisfy the Wilf equivalences listed in \cref{eq:wilf}, with possible exception as demarcated with a question mark.
\end{theorem}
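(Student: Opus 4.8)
The plan is to prove each Wilf equivalence in \cref{eq:wilf} by constructing, for every $n$, an explicit bijection between the relevant sets $I_n(p)$ of length-$n$ inversion sequences avoiding $p$, and to complete the classification of Wilf equivalence classes by a finite computation. It helps to sort the two-term equivalences by the shape of the required map. The equivalences $0312\equiv0321$, $2201\equiv2210$, $2301\equiv2310$, and $3201\equiv3210$ all transpose the last two letters of the pattern, and in each case the pattern splits as a ``context'' prefix (forcing inequalities among some early entries) followed by two letters whose ascending-versus-descending relative order is exactly what is being swapped, both constrained to a value-window determined by the context; here I would look for a bijection that locally reorders a controlled block of tail-eligible entries so that every ascending completion of a would-be occurrence becomes a descending one and conversely, leaving all other entries fixed. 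The remaining equivalences $1102\equiv1012$ and $0221\equiv0212$, together with the two three-term classes $1011\equiv1101\equiv1110$ and $2110\equiv2101\equiv2011$, all have the common shape ``a distinguished small entry changes its position relative to a run of two or more equal, larger entries'' (with, in some cases, an extra larger entry pinned at the front or back); for these I would set up a single construction that effectively slides that small entry across the run --- or, since positions cannot be permuted freely in an inversion sequence, re-chooses which entries play the role of the run and which the role of the small entry --- so as to trade occurrences of one pattern for occurrences of the target. For the three-term classes this construction must be compatible with all three patterns at once, which I would arrange by routing each inversion sequence through a canonical representative, for instance the one in which the distinguished entry has been pushed as far right as the inversion-sequence constraints allow.

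The main obstacle is twofold. First, one must keep the image a genuine inversion sequence: the constraint $0\le e_i<i$ forbids naive rearrangements and forces the blocks and moves above to be chosen with care. Second, one must check that the map neither destroys nor creates occurrences through entries lying outside the modified block, which requires analyzing how the indices of a hypothetical occurrence of the target pattern interleave with the block boundaries. I expect $0221\equiv0212$ and the two three-term classes to be the most delicate, since there the run of equal entries and the migrating entry can interact with the rest of the sequence in more ways. Should a clean bijection resist one of these cases, a fallback is the kernel method: derive a functional equation for the generating function of $I_n(p)$ refined by the last letter (and, if needed, one further statistic), solve it separately for the two patterns, and verify that the solutions coincide, in the spirit of Chern's analysis of $0012$ \cite{Chern}.

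Finally, to see that \cref{eq:wilf} is exhaustive --- that no two patterns from distinct listed classes are Wilf equivalent and that no other length-4 pattern joins any class --- I would compute $|I_n(p)|$ for every length-4 pattern $p$ and all $n\le 10$ and, for each inequivalent pair, exhibit a value $n\le 10$ at which the two counts differ. The phenomenon flagged in the introduction --- that $2001$ matches the class $\{2110,2101,2011\}$ for every $n\le 9$ and diverges only at $n=10$ --- shows that this search must genuinely be carried out to $n=10$, while the single open case $3012\overset{?}{\equiv}3201$ is exactly the one for which the data through $n=10$ remains consistent with Wilf equivalence.
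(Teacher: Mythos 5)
Your plan defers essentially all of the mathematical content to bijections that are described only by intent, and for several of the cases those intended bijections are precisely where the difficulty lies. Take $0312\equiv 0321$, which you file under ``transpose the last two letters and locally reorder a controlled block of tail-eligible entries.'' An entry of $e$ can serve as the third or fourth letter of many occurrences simultaneously, each with a different context prefix, so there is no well-defined ``block'' to reorder while ``leaving all other entries fixed''; the paper instead strips the leading $0$ to reduce the problem to $210$ versus $201$ avoidance over $S$-inversion sequences (\cref{theorem: 0 initial positive pattern S-inversion summation}), and then invokes the nontrivial theorem of Corteel--Martinez--Savage--Weselcouch that $|\I_S(210)|=|\I_S(201)|$. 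Nothing in your sketch produces that reduction or that equivalence. Likewise, your grouping puts $3201\equiv 3210$ in the same bin, but the paper handles it by a completely different mechanism: structural characterizations ($3210$-avoiders are exactly the sequences partitionable into three weakly increasing subsequences; $3201$-avoiders satisfy a condition phrased via weak left-to-right maxima and second maxima) followed by a greedy reassignment bijection. A uniform ``swap ascending for descending completions'' map covering all four tail-transposition cases does not exist in the paper and is not constructed by you.

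For the remaining classes ($1011\equiv1101\equiv1110$, $2110\equiv2101\equiv2011$, $0221\equiv0212$, $1102\equiv1012$, $2201\equiv2210$, $2301\equiv2310$) the paper does not exhibit bijections at all. It proves equality of counts by a double induction on $|S|$ and $\min S$ over $S$-inversion sequences, refined by carefully chosen statistics --- the numbers of zeros and ones, the terminal and initial $h$-repeat statistics, the initial non-inversion statistic, and the initial positive set --- with the combinatorial core being \cref{lemma: binary one zero on binary words} (counting binary words with $j$ zeros and $k$ ones avoiding a $0/1$-pattern with a single zero). Your ``slide the small entry across the run'' and ``route through a canonical representative'' ideas would, to be made rigorous, require identifying exactly which data of a sequence must be preserved so that the fibers over a canonical representative have equal size for the two patterns; that identification is the entire content of the paper's argument and is absent from your proposal. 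Your fallback (kernel method) is likewise not carried out and is not known to apply here. The one component you do have right is the exhaustiveness argument: a computer search through $n\le 10$, which is needed because $2001$ agrees with the class $2110\equiv2101\equiv2011$ through $n=9$ and diverges only at $n=10$. But as it stands the positive direction of \cref{theorem:main_classification} is not proved by your outline.
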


The paper is organized as follows.
In \cref{section: preliminaries}, we introduce necessary definitions and notation.
In \cref{section: Wilf 4}, we establish the aforementioned equivalences using techniques including double induction and direct characterization, together with explicitly constructed correspondences.
In \cref{section: enumeration 4}, we enumerate the 0000 and 0111-avoiding inversion sequences.
Aside from 0012 as addressed by Chern \cite{Chern}, the only other length-4 pattern that is on the OEIS \cite{sloane2018line} is 0021, whose enumeration we leave as an open question, which after the writing of the original version of this paper has been resolved (see \cref{remark:0021 solved}).

\section{Preliminaries}\label{section: preliminaries}

For a positive integer $n$, let $[n]$ denote the set $\{1,2,\dots,n\}$.
An inversion sequence of length $n$ is a sequence $e=e_1\cdots e_n$ of integers such that $0 \leq e_i < i$ for all $i\in[n]$.
We denote the length of $e$ by $|e|=n$.
The set of inversion sequences of length $n$ is denoted by $\I_n$, where we use the convention that $\I_0$ contains exactly one sequence, the empty sequence.

Two sequences of integers $\pi=\pi_1\cdots\pi_k$ and $\sigma=\sigma_1\cdots\sigma_k$ of the same length are said to be \emph{order isomorphic}, denoted $\pi\sim\sigma$, if both $\pi_i R \pi_j$ and $\sigma_i R \sigma_j$ have the same relation $R\in\{<,=,>\}$, for all $1 \leq i,j \leq n$.
For example, $0212\sim 5969$.
A \emph{pattern} refers to such a sequence of integers $\pi=\pi_1\cdots\pi_k$.

For an inversion sequence $e \in \I_n$ and a pattern $\pi=\pi_1\cdots\pi_k$ for $k \leq n$, we say $e$ \emph{contains} $\pi$ as a pattern if there is a not necessarily consecutive subsequence $e'$ of $e$ with $|e'|=k$ such that $e' \sim \pi$.
For $S \subseteq [n]$, we let $e_S$ denote the subsequence of $e$ consisting of the elements $e_i$ for $i \in S$, sorted in ascending order of $i$.
Using this notation, $e$ contains $\pi$ if there exists $S \subseteq[n]$ with $|S|=k$ such that $e_S \sim \pi$.
If $e$ does not contain $\pi$, it is said to \emph{avoid} $\pi$.
In particular, if $|e|<|\pi|$, we also say $e$ avoids $\pi$.
The same definition can be used to define pattern avoidance on permutations.

The \emph{avoidance class} of $\pi$ is
\[ \I_n(\pi) = \{e \in \I_n\mid e \text{ avoids } \pi\}. \]
We say two patterns $\pi$ and $\sigma$ are \emph{Wilf equivalent}, denoted $\pi\equiv\sigma$, if for all $n \geq 1$, we have $|\I_n(\pi)|=|\I_n(\sigma)|$.

We now define the following generalization of an inversion sequence, as originally introduced by Savage and Schuster \cite{savage2012ehrhart}.
\begin{definition}
For a finite set of positive integers $S\subset\Z_+$ enumerated in increasing order $s_1<\cdots<s_n$, an \emph{$S$-inversion sequence} is a sequence $e=e_1\cdots e_n$ of length $n$ such that $0 \leq e_i < s_i$ for all $i\in [n]$.
The set of $S$-inversion sequences is denoted by $\I_S$.
\end{definition}
Notice that for $S=[n]$, we recover the original definition of an inversion sequence of length $n$, i.e., $\I_{[n]}=\I_n$.
We continue to use the same notation for pattern avoidance on $S$-inversion sequences as on inversion sequences: $\I_S(\pi)$ is the set of $S$-inversion sequences that avoid $\pi$.
We note that we define $\I_{\emptyset}$ to contain the empty sequence, which avoids all patterns, consistent with the previous observation that $\I_{[n]}=\I_n$ when $n=0$.

Finally, define $e\cdot f$ as the concatenation of sequences $e$ and $f$. For example, $e\cdot f=142857$ for $e=14$ and $f=2857.$

\section{Wilf equivalences of length-4 patterns}\label{section: Wilf 4}

Before we prove the Wilf equivalences of length-4 patterns, we present the following useful result.

\begin{theorem}\label{theorem: 0 initial positive pattern S-inversion summation}
For any $n \geq 1$ and a pattern $\pi=\pi_1\cdots\pi_k$ where $\pi_1 = 0$ and $\pi_i > 0$ for all $i > 1$,
\[ |\I_n(\pi)| = \sum_{S \subseteq [n-1]} |\I_S(\pi_2\cdots\pi_k)|.\]
\end{theorem}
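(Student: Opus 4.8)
The plan is to set up a bijection between $\I_n(\pi)$ and the disjoint union $\bigsqcup_{S\subseteq[n-1]}\I_S(\pi_2\cdots\pi_k)$. The key observation is that since $\pi_1=0$ and every later entry of $\pi$ is positive, an occurrence of $\pi$ in an inversion sequence $e=e_1\cdots e_n$ requires a position playing the role of the $0$ whose value is strictly less than all the values at the positions playing the roles of $\pi_2,\ldots,\pi_k$. In particular, the smallest value that occurs in $e$ is the only candidate that can serve as the leading $0$ (together with any position achieving that minimum). The natural choice is to single out the positions where $e$ attains its minimum value, which for inversion sequences is most cleanly handled by looking at the value $0$: every inversion sequence of positive length has $e_1=0$, so $0$ always occurs.

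Concretely, given $e\in\I_n$, let $Z=\{i\in[n]: e_i=0\}$ and let $S=[n]\setminus Z$, viewed as a subset of positions; then consider the subsequence $e_S$ of nonzero entries. First I would check that $e\in\I_n(\pi)$ if and only if $e_S$, as an abstract integer sequence, avoids $\pi_2\cdots\pi_k$: an occurrence of $\pi$ in $e$ must use one of its entries as the "$0$" and that entry must be strictly below the others, forcing it to lie in $Z$ and the remaining $k-1$ entries (which are strictly positive relative to it, hence nonzero) to lie in $S$, giving an occurrence of $\pi_2\cdots\pi_k$ in $e_S$; conversely any occurrence of $\pi_2\cdots\pi_k$ in $e_S$ can be prepended with $e_1=0$ (note $1\in Z$ so $1\notin S$, and $1$ precedes all of $S$) to yield an occurrence of $\pi$ in $e$. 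Then I would verify the correspondence is a bijection onto the claimed index set: if $S=\{s_1<\cdots<s_m\}$ is the position set, then the constraint $0\le e_{s_j}<s_j$ on the original sequence, combined with $e_{s_j}\neq 0$, says precisely that $e_S$ ranges over all sequences with $1\le (e_S)_j<s_j$; but $S\subseteq\{2,\ldots,n\}$ always (since $1\notin S$), so re-indexing $S$ by subtracting $1$ from every element gives a subset $S'\subseteq[n-1]$, and $e_S$ with the shift $(e_S)_j\mapsto (e_S)_j-1$ becomes an arbitrary element of $\I_{S'}(\pi_2\cdots\pi_k)$ — here the shift by $1$ turns the bounds $1\le x< s_j$ into $0\le x< s_j-1$, matching the $S'$-inversion sequence definition, and shifting all entries down by the same constant preserves order isomorphism and hence pattern avoidance. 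Summing over all $S'\subseteq[n-1]$ gives the formula, and I should separately note the edge cases: $S'=\emptyset$ corresponds to the all-zero sequence, consistent with $\I_\emptyset$ containing the empty sequence which avoids $\pi_2\cdots\pi_k$.

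The main obstacle is bookkeeping rather than conceptual: I need to be careful that the map $S\mapsto S'$ (subtract $1$) and the value shift (subtract $1$) are applied consistently and that the resulting bounds line up exactly with the definition of $\I_{S'}$, and I must make sure the equivalence "$e$ avoids $\pi$ iff $e_S$ avoids $\pi_2\cdots\pi_k$" is argued with enough care about which entry of a putative occurrence of $\pi$ is forced to be the minimum — this uses $\pi_1=0$ and $\pi_i>0$ for $i>1$ in an essential way, and it is worth spelling out that the entry of $e$ matched to $\pi_1$ is strictly smaller than the others, hence equals $0$ since $0$ is the global minimum available and the other matched entries, being larger, are nonzero. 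Once those points are nailed down, the bijection and the summation follow immediately.
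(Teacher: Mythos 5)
Your proposal is correct and follows essentially the same route as the paper: split $e$ into its zero positions and its positive positions $S$, show $e$ avoids $\pi$ iff $e_S$ avoids $\pi_2\cdots\pi_k$ (prepending $e_1=0$ for one direction), and identify the positive part, after shifting positions and values down by $1$, with an arbitrary element of $\I_{S'}(\pi_2\cdots\pi_k)$ for $S'\subseteq[n-1]$. One intermediate assertion --- that the entry playing the role of $\pi_1$ in an occurrence of $\pi$ is forced to equal $0$ --- is not actually true (it need only be strictly smaller than the other matched entries), but nothing in your argument depends on it, since the direction you need only requires the entries matched to $\pi_2,\ldots,\pi_k$ to be positive, which holds because they strictly exceed a nonnegative entry.
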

\begin{proof}
Define $\pi'=\pi_2\cdots\pi_k$.
Notice that all elements of $\pi'$ are positive.
Consider $e \in \I_n$, and let $S$ be the set of indices $i \in [2,n]$ such that $e_i > 0$.
We claim that $e_S$ avoids $\pi'$ if and only if $e$ avoids $\pi$.
If $e_S$ contains $\pi'$, then suppose $e_{i_1}\cdots e_{i_{k-1}} \sim \pi'$ for $i_1,\dots,i_{k-1}\in S$.
By definition of $S$, we have $e_{i_1},\dots,e_{i_{k-1}} > 0$, and thus adding in $e_1 = 0$ yields $e_1 e_{i_1} \cdots e_{i_{k-1}} \sim \pi$.
Conversely, if $e$ contains $\pi$, suppose $e_{i_0}\cdots e_{i_{k-1}}\sim \pi$ for $1 \leq i_0 < i_1 < \cdots < e_{i_{k-1}} \leq n$.
Then as all elements of $\pi$ are positive except for $\pi_1=0$, we have $e_{i_1},\dots,e_{i_{k-1}} > e_{i_0} \geq 0$, so $e_{i_1}\cdots e_{i_{k-1}}\sim \pi'$ is a subsequence of $e_S$, which thus contains $\pi'$.

Let $S^-=\{s-1\mid s \in S\}$.
Notice that $e_S^-$ is a $S^-$-inversion sequence, and there is a natural bijection between $\I_{S^-}$ and the elements of $\I_n$ with $e_i> 0$ if and only if $i \in S$.
Thus, for a fixed subset $S\subseteq[2,n]$, the number of elements of $\I_n(\pi)$ with $e_i>0$ if and only if $i \in S$ is equal to $|\I_{S^-}(\pi')|$.
Summing over all such subsets $S$ and re-indexing over $S^-$ instead yields the result.
\end{proof}

The following result was initially stated for inversion sequences rather than $S$-inversion sequences, but we note that the same proof works to obtain the following stronger result.
\begin{theorem}[\protect{\cite[Theorem 5]{corteel2016patterns}}]\label{theorem: 210 = 201 on S-inversion}
For any finite set $S$ of positive integers,
\[ |\I_S(210)|=|\I_S(201)|.\]
\end{theorem}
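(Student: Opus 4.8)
The plan is to induct on $|S|$ by deleting the last letter, the base case $S=\emptyset$ being immediate. Write $S=\{s_1<\dots<s_n\}$ and $S'=\{s_1<\dots<s_{n-1}\}$, and for $e\in\I_S$ set $\hat e=e_1\cdots e_{n-1}$ and $v=e_n$, so that $\hat e\in\I_{S'}$, $0\le v<s_n$, and, since $s_i<s_n$ for all $i<n$, every entry of $\hat e$ lies in $\{0,\dots,s_n-1\}$. Every $e\in\I_S(\pi)$ (for $\pi\in\{210,201\}$) arises uniquely this way from some $\hat e\in\I_{S'}(\pi)$ and an admissible $v$, so the count of $\I_S(\pi)$ is $\sum_{\hat e}(\text{number of admissible }v)$.

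Next I would record the two one-letter extension rules. Because $v$ occupies the last position, in any occurrence of $210$ using $v$ it must play the role of the rightmost ``$0$'', and in any occurrence of $201$ it must play the rightmost ``$1$''. Hence $\hat e\,v\in\I_S(210)$ iff $\hat e\in\I_{S'}(210)$ and there is no pair $i<j$ with $\hat e_i>\hat e_j>v$, i.e. iff $v\ge\mu(\hat e)$, where $\mu(\hat e):=\max\bigl(\{0\}\cup\{\hat e_j:\hat e_i>\hat e_j\text{ for some }i<j\}\bigr)$; and $\hat e\,v\in\I_S(201)$ iff $\hat e\in\I_{S'}(201)$ and $v\notin G(\hat e):=\{w:\hat e_j<w<\hat e_i\text{ for some }i<j\}$. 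So $\hat e\in\I_{S'}(210)$ admits exactly $s_n-\mu(\hat e)$ last letters, and $\hat e\in\I_{S'}(201)$ admits exactly $s_n-g(\hat e)$, where $g(\hat e):=\bigl|\{0,\dots,s_n-1\}\cap G(\hat e)\bigr|$; note that $\mu(\hat e),g(\hat e)\le\max_i\hat e_i<s_{n-1}$, so neither depends on $s_n$.

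Summing over prefixes gives $|\I_S(210)|=s_n\,|\I_{S'}(210)|-\sum_{\hat e\in\I_{S'}(210)}\mu(\hat e)$ and $|\I_S(201)|=s_n\,|\I_{S'}(201)|-\sum_{\hat e\in\I_{S'}(201)}g(\hat e)$. By induction the leading terms agree, so everything reduces to the refined identity
\[ \sum_{\hat e\in\I_{S'}(210)}\mu(\hat e)\;=\;\sum_{\hat e\in\I_{S'}(201)}g(\hat e)\qquad\text{for every finite }S'. \]
Proving this refined identity is the step I expect to be the main obstacle, since the bare size induction does not close: the two extension counts are governed by superficially different statistics. One route is a more refined induction, but one must carry enough extra data to recurse — under deletion of the last letter $\mu$ updates using only $\max_i e_i$, whereas the analogous update for the $201$-side needs the whole ``straddled set'' $G(\hat e)$, so the bookkeeping is asymmetric and must be arranged carefully. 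A cleaner route is an explicit bijection $\I_{S'}(210)\to\I_{S'}(201)$ carrying $\mu$ to $g$; here one can start from the structural facts that the non-(weak left-to-right maxima) of a $210$-avoider form a single weakly increasing subsequence, while those of a $201$-avoider split into weakly decreasing maximal runs, the difficulty being that naively reversing each run destroys the inequalities linking entries of different runs, so values must be redistributed across runs.

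Finally, I would note that every step above used only the defining inequalities $0\le e_i<s_i$ together with $s_1<\dots<s_n$ — the latter exactly so that deleting the last letter stays within the family and the last letter has the widest range. The argument is therefore insensitive to whether $S=[n]$ or $S$ is an arbitrary finite set, which is why the proof of \cite[Theorem~5]{corteel2016patterns} yields the stated generalization.
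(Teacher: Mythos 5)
Your setup is correct as far as it goes: deleting the last letter, the extension counts $s_n-\mu(\hat e)$ for $210$ and $s_n-g(\hat e)$ for $201$ are right, and so is your closing observation that only the chain condition $s_1<\cdots<s_n$ is used, which is exactly why the cited proof transfers from $[n]$ to arbitrary finite $S$. But the argument stops at the point where the actual work begins. You reduce the theorem to the refined identity
\[ \sum_{\hat e\in\I_{S'}(210)}\mu(\hat e)\;=\;\sum_{\hat e\in\I_{S'}(201)}g(\hat e), \]
correctly note that the bare size induction cannot establish it (the hypothesis $|\I_{S'}(210)|=|\I_{S'}(201)|$ says nothing about the joint distribution of these statistics over the two classes), and then only gesture at two possible ways to prove it without carrying either out. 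That identity is the entire content of the theorem, so as written this is a genuine gap rather than a completed proof.

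The proof this paper relies on (Theorem 5 of Corteel--Martinez--Savage--Weselcouch, which the paper observes goes through verbatim for $S$-inversion sequences) takes a different route: it characterizes the two avoidance classes structurally and builds an explicit bijection. Concretely, $e$ avoids $210$ if and only if the entries that are not weak left-to-right maxima form a weakly increasing subsequence, while $e$ avoids $201$ if and only if every entry $e_i$ that is not a weak left-to-right maximum satisfies, for all $j>i$, either $e_j\le e_i$ or $e_j\ge M_i$, where $M_i$ is the running maximum before position $i$; the bijection fixes the positions and values of the weak left-to-right maxima and redistributes the remaining multiset of values greedily into the remaining positions. This is precisely the length-three analogue of \cref{3210 characterization}, \cref{3201 characterization}, and \cref{theorem: 3210 = 3201} in this paper. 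To complete your argument you would either need to construct that bijection (which, since it fixes the maxima and hence the data governing $\mu$ and $g$, does yield your refined identity) or abandon the last-letter recursion in favor of the characterization approach directly.
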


This allows us to prove that 0312 and 0321 are Wilf equivalent.
\begin{theorem}\label{theorem: 0312 = 0321}
For $n \geq 1$,
\[ |\I_n(0312)| = |\I_n(0321)| = \sum_{S \subseteq [n-1]} |\I_{S} (210)|. \]
\end{theorem}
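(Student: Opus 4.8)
The plan is to apply \cref{theorem: 0 initial positive pattern S-inversion summation} to each of the two patterns and then reconcile the two resulting sums termwise. Both $0312$ and $0321$ begin with $0$ and have all remaining entries positive, so that theorem immediately gives
\[
|\I_n(0312)| = \sum_{S \subseteq [n-1]} |\I_S(312)|, \qquad |\I_n(0321)| = \sum_{S \subseteq [n-1]} |\I_S(321)|.
\]
It therefore suffices to prove that $|\I_S(312)| = |\I_S(321)| = |\I_S(210)|$ for every finite set $S$ of positive integers, after which summing over $S \subseteq [n-1]$ yields all three claimed equalities at once.

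For the equality $|\I_S(321)| = |\I_S(210)|$, I would simply note that $321$ and $210$ are order isomorphic as patterns, since the entries of each are in strictly decreasing order; because pattern containment is defined via order isomorphism, this gives $\I_S(321) = \I_S(210)$ as \emph{sets}, not merely as equinumerous families. This disposes of the $0321$ half of the statement with no further work.

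For the equality $|\I_S(312)| = |\I_S(210)|$, I would first observe that $312$ and $201$ are likewise order isomorphic as patterns (the entries of each have relative ranks $3,1,2$), so again $\I_S(312) = \I_S(201)$ as sets. Now I invoke \cref{theorem: 210 = 201 on S-inversion}, which asserts $|\I_S(201)| = |\I_S(210)|$ for every finite set $S$ of positive integers; chaining the two statements gives $|\I_S(312)| = |\I_S(201)| = |\I_S(210)|$, completing the needed identity.

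The argument is essentially bookkeeping once the two cited results are in hand; the only point requiring any care is recognizing that the patterns $312$ and $321$ appearing here are, up to order isomorphism, the patterns $201$ and $210$ actually treated by \cref{theorem: 210 = 201 on S-inversion}, so that pattern avoidance is genuinely unchanged. There is no substantive obstacle beyond this observation: the real content has already been extracted into the two preceding theorems, one of which lifts the classical identity $|\I_S(210)| = |\I_S(201)|$ of Corteel--Martinez--Savage--Weselcouch to $S$-inversion sequences, and the other of which converts a leading-$0$ pattern into a sum over subsets of $[n-1]$.
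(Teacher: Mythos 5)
Your proof is correct and takes essentially the same route as the paper, whose entire proof is to combine \cref{theorem: 0 initial positive pattern S-inversion summation} with \cref{theorem: 210 = 201 on S-inversion}. The only difference is that you spell out the order-isomorphism reductions $312\sim 201$ and $321\sim 210$ that the paper leaves implicit; this is a harmless (and arguably helpful) elaboration, not a different argument.
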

\begin{proof}
The result follows from applying \cref{theorem: 210 = 201 on S-inversion} to \cref{theorem: 0 initial positive pattern S-inversion summation}.
\end{proof}

\subsection{Wilf equivalences by double induction}
The following lemma is useful for many of our later results.
A \emph{binary word} of length $n$ is an element of $\{0,1\}^n$, i.e., a string of $n$ zeros and ones.
Pattern avoidance on binary words is defined analogously.

\begin{lemma}\label{lemma: binary one zero on binary words}
Let $\pi=\pi_1\cdots\pi_\ell$ be a pattern of length $\ell \geq 2$ such that $\pi_i \in \{0,1\}$ for all $i$, and there exists exactly one $j$ such that $\pi_j=0$.
Then for any two integers $j,k \geq 0$, the number of binary words of length $j+k$ with $j$ zeros and $k$ ones that avoid $\pi$ is $\binom{j+\min\{k,\ell-2\}}{j}$.
\end{lemma}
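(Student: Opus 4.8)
The plan is to reduce to the case where $\pi$ has a single $0$ in its first or last position, and then count directly. Observe first that since $\pi$ is a binary pattern of length $\ell$ with exactly one $0$, it is order isomorphic to one of $01\cdots1$, $1\cdots101\cdots1$ (with the $0$ in some interior slot), or $1\cdots10$; in every case a binary word $w$ contains $\pi$ if and only if $w$ contains an occurrence of $\ell-1$ ones with a $0$ appearing among them in the position dictated by $j_0$, the index of the zero in $\pi$. I would argue that for the purposes of counting binary words with a fixed number $j$ of zeros and $k$ of ones, what matters is only $\ell$ and not the exact location of the zero: indeed, $w$ avoids $\pi$ iff $w$ has no $0$ that simultaneously has at least $j_0-1$ ones before it and at least $\ell-1-j_0$ ones after it. The total count of ones is fixed at $k$, so a zero at a position with $a$ ones to its left and $k-a$ ones to its right is "bad" iff $a \ge j_0-1$ and $k-a \ge \ell-1-j_0$. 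The number of zeros that can avoid being bad depends only on how many of the $k$ "gaps between/around ones" are permissible, and a short count shows this permissible set always has size $\min\{k,\ell-2\}+1$ regardless of $j_0$; hence the answer depends only on $\ell$, and we may as well take $\pi = 1^{\ell-1}0$.

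With $\pi = 1^{\ell-1}0$, a binary word avoids $\pi$ iff no $0$ has $\ell-1$ or more ones preceding it, equivalently every $0$ in the word has strictly fewer than $\ell-1$ ones before it. Encode the word by the sequence $(a_1,\ldots,a_j)$ where $a_t \in \{0,1,\ldots,k\}$ is the number of ones preceding the $t$-th zero; this is a weakly increasing sequence, and conversely every weakly increasing sequence in $\{0,\ldots,k\}^j$ corresponds to a unique such word. The avoidance condition is exactly $a_j \le \min\{k,\ell-2\}$, i.e., all $a_t$ lie in $\{0,1,\ldots,\min\{k,\ell-2\}\}$, a set of size $\min\{k,\ell-2\}+1$. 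The number of weakly increasing sequences of length $j$ from a set of size $m+1$ is $\binom{j+m}{j}$, so with $m = \min\{k,\ell-2\}$ this gives $\binom{j+\min\{k,\ell-2\}}{j}$, as desired.

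I would carry out the steps in this order: (i) set up the "bad zero" characterization for a general binary pattern with one zero; (ii) prove the count is independent of the zero's location, reducing to $\pi = 1^{\ell-1}0$; (iii) establish the bijection with weakly increasing sequences and read off the binomial coefficient. I expect step (ii) — showing location-independence cleanly — to be the main obstacle, since one must handle the boundary interaction between $j_0$, $k$, and $\ell$ carefully; the alternative is to skip (ii) and instead redo the weakly-increasing-sequence count directly for each position of the zero, tracking both a left constraint ($\ge j_0-1$ ones before the last relevant zero is disallowed) and a right constraint, which also works but is slightly messier. Either way the combinatorial core is the elementary identity counting monotone sequences, and the only real content is correctly identifying that the effective alphabet size for the zero-positions is $\min\{k,\ell-2\}+1$.
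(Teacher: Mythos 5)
Your approach is correct and takes a genuinely different route from the paper's. The paper argues by cases on the position of the unique zero in $\pi$: when $k\le\ell-2$ every word avoids; when the zero is at an end, all but $\ell-2$ of the ones are forced to one end of the word; and when the zero is interior, a middle run of $k-\ell+3$ ones is forced to be consecutive and is counted as a single block. You instead encode a word with $j$ zeros and $k$ ones as the weakly increasing sequence $(a_1,\dots,a_j)$ of left-one-counts of its zeros, observe that avoidance means every $a_t$ lies in a fixed ``permissible'' subset of $\{0,\dots,k\}$, and invoke the multiset count $\binom{j+m}{j}$, which depends only on the size of that subset and not on whether it is an interval. This is more uniform than the paper's case split, and it makes your step (ii) --- the reduction to $\pi=1^{\ell-1}0$ --- unnecessary: once you know the permissible set has size $\min\{k,\ell-2\}+1$ for every position of the zero, the alphabet-size-invariance of the multiset count finishes the proof directly. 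One slip to correct: with $j_0$ the $1$-indexed position of the zero in $\pi$, a zero of $w$ is bad iff it has at least $j_0-1$ ones before it and at least $\ell-j_0$ (not $\ell-1-j_0$) ones after it; as written your two thresholds sum to $\ell-2$ rather than $\ell-1$, and taken literally they would give a permissible set of size $\ell-2$ when $k\ge\ell-2$. With the corrected condition the bad values of $a$ form the interval $[j_0-1,\,k-\ell+j_0]$, which is empty when $k\le\ell-2$ and has size $k-\ell+2$ otherwise, so the permissible set has size $\min\{k,\ell-2\}+1$ as you claim and the rest of the argument goes through.
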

\begin{proof}
There are $\binom{j+k}{k}$ binary words of length $j+k$ with $j$ zeros and $k$ ones, corresponding to choosing the positions of the ones.
If $k \leq \ell - 2$, as $\pi$ has $\ell-1$ ones, all of these binary words avoid $\pi$, so there are $\binom{j+k}{k}=\binom{j+\min\{k,\ell-2\}}{j}$ valid binary words.
If $k \geq \ell-1$, suppose $\pi_j$ for $j \in [\ell]$ is the unique zero in $\pi$.
If $j=1$ or $j=\ell$, then all but $\ell-2$ of the $k$ ones must be at the beginning or end of the binary word, respectively, which yields $\binom{j+\ell-2}{\ell-2}=\binom{j+\min\{k,\ell-2\}}{j}$ valid binary words.
Otherwise $2 \leq j \leq \ell-1$, and then the $i$-th and $(i+1)$-th ones of any valid binary word must be consecutive, for all $j-1 \leq i \leq k+j-\ell $.
If this were not the case, then suppose there exists such an $i$ where the $i$-th and $(i+1)$-th ones are not consecutive.
Then the $(i-j+2)$-th through $i$-th ones, a zero between the $i$-th and $(i+1)$-th ones, and the $(i+1)$-th through $(i+\ell-j)$-th ones, form a $\pi$ pattern.
Notice that $i-j+2\geq 1$ and $i+\ell-j \leq k$, so the indices are valid.
Note that this condition is a necessary and sufficient condition for the binary word to avoid $\pi$.
Thus, the $(j-1)$-th through $(k+j-\ell+1)$-th ones in the binary word are consecutive; viewing this block of $k-\ell+3$ ones as a single entity allows us to determine that there are $\binom{j+k-(k-\ell+2)}{j} = \binom{j+\min\{k,\ell-2\}}{j}$ such valid binary words.
\end{proof}

\cref{lemma: binary one zero on binary words} allows us to prove the following result.

\begin{theorem}\label{theorem: binary non-initial zero on S-inversion}
Let $\pi=\pi_1\cdots\pi_\ell$ and $\sigma=\sigma_1\cdots\sigma_\ell$ be two patterns of length $\ell \geq 3$ such that $\pi_i,\sigma_i\in\{0,1\}$ for all $2 \leq i \leq \ell$ and $\pi_1=\sigma_1=1$.
If there exists exactly one $j$ such that $\pi_j=0$ and exactly one $j'$ such that $\sigma_{j'}=0$, then for any finite set $S$ of positive integers, $|\I_S(\pi)|=|\I_S(\sigma)|$.
\end{theorem}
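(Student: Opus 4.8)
The plan is to pass to a \emph{level decomposition} of inversion sequences. For a sequence $e=e_1\cdots e_m$ of nonnegative integers and an integer $a\ge 0$, let $\beta_a(e)$ be the binary word obtained by reading $e$ from left to right and recording a $1$ at each index $i$ with $e_i=a$, a $0$ at each index with $e_i<a$, and nothing at indices with $e_i>a$. If $\tau$ is a binary pattern with exactly one $0$, then every occurrence of $\tau$ in $e$ has all its ``$1$''-entries equal to a common value $a$ and its unique ``$0$''-entry strictly below $a$, hence that occurrence is visible inside $\beta_a(e)$; conversely an occurrence of $\tau$ inside some $\beta_a(e)$ lifts to one in $e$. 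So $e$ avoids $\tau$ iff $\beta_a(e)$ avoids $\tau$ for every $a$. Our patterns are exactly the words $\pi^{(z)}:=1^{z-1}\,0\,1^{\ell-z}$ with $z\in\{2,\dots,\ell\}$, so it suffices to show $|\I_S(\pi^{(z)})|$ is independent of $z$. I would in fact prove the more flexible statement that $|\I_T(\pi^{(z)})|$ is independent of $z$ for \emph{every weakly increasing finite sequence} $T$ of positive integers, where $\I_T$ denotes the sequences $e_1\cdots e_{|T|}$ with $0\le e_i<T_i$; a finite set written in increasing order is such a $T$, and the extra generality is what keeps the recursion below self-contained.

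The proof would go by induction on $m=|T|$, with $m=0$ trivial. Given $e\in\I_T(\pi^{(z)})$ with $m\ge 1$, I record its maximum value $M$ and the nonempty set $F=\{i:e_i=M\}$. Deleting the entries at positions in $F$ leaves a sequence $e''$ on the remaining indices, lying in $\I_{T^{(M,F)}}$ with $T^{(M,F)}:=(\min(T_i,M))_{i\notin F}$ and having all entries $<M$. This gives a bijection $e\mapsto(M,F,e'')$ onto the triples with $M\ge 0$, with $F$ a nonempty subset of $A_M:=\{i:T_i>M\}$ whose indicator word $\mathbf 1_F$ (a binary word of length $m$) avoids $\pi^{(z)}$, and with $e''\in\I_{T^{(M,F)}}(\pi^{(z)})$. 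The reason this is correct is the level characterization: $e$ avoids $\pi^{(z)}$ iff all of $\beta_0(e),\beta_1(e),\dots$ do, and here $\beta_M(e)=\mathbf 1_F$, each $\beta_a(e)$ with $a<M$ agrees with the corresponding level word of $e''$, and each $\beta_a(e)$ with $a>M$ is the all-zero word; so $e$ avoids $\pi^{(z)}$ iff $\mathbf 1_F$ avoids $\pi^{(z)}$ and $e''$ does. Reassembling $e$ from $(M,F,e'')$ is manifestly the inverse map, and it produces an element of $\I_T(\pi^{(z)})$.

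Then I would group the triples by $k:=|F|$. Since $T$ is weakly increasing, $A_M$ is a suffix of $\{1,\dots,m\}$, so $\mathbf 1_F$ is $m-|A_M|$ leading zeros followed by a length-$|A_M|$ word with $k$ ones; as the $0$ of $\pi^{(z)}$ is never its first entry, those leading zeros are irrelevant to $\pi^{(z)}$-avoidance. Hence the number of admissible $F$ with $|F|=k$ equals the number of length-$|A_M|$ binary words with $k$ ones avoiding $\pi^{(z)}$, which by \cref{lemma: binary one zero on binary words} is $\binom{(|A_M|-k)+\min\{k,\ell-2\}}{|A_M|-k}$ --- independent of $z$. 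Likewise, because $F\subseteq A_M$ and $A_M$ is a suffix, $T^{(M,F)}$ is just the entries of $T$ that are $\le M$, in order, followed by $|A_M|-k$ copies of $M$; this depends only on $T$, $M$, $k$, is again weakly increasing, and has length $m-k<m$, so $|\I_{T^{(M,F)}}(\pi^{(z)})|$ is independent of $z$ by induction. Summing the product of these two $z$-independent quantities over $M\ge 0$ (a finite range, since $A_M=\emptyset$ once $M\ge\max_iT_i$) and $k\ge 1$ yields $|\I_T(\pi^{(z)})|$, which is therefore independent of $z$.

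The crux is in two places. First, one must notice that the natural induction forces enlarging the universe from sets $S$ to weakly increasing bound sequences $T$, because capping values at the maximum $M$ typically creates repeated bounds; it is precisely weak monotonicity that makes $A_M$ a suffix, which in turn is what makes both the count of admissible $F$ and the sequence $T^{(M,F)}$ depend on $F$ only through $|F|$. Second, one must carefully justify the bijection of the inductive step --- in particular that admissibility of $F$ really is the single clean condition ``$\mathbf 1_F$ avoids $\pi^{(z)}$'' --- which is exactly where the level decomposition is used. Once these are in place there is no case analysis on the location $z$ of the $0$: all of the $z$-dependence has been funneled into one binary-word count, and \cref{lemma: binary one zero on binary words} says that count does not see $z$.
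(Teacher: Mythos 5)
Your proof is correct, but it takes a genuinely different route from the paper's. The paper runs a double induction on $(|S|,\min S)$ and peels off the \emph{bottom} of the value range: the map $\phi(e)_i=\max\{e_i-1,0\}$ merges the zeros and ones of $e$ into the zeros of a sequence over $S^-=\{s-1\mid s\in S\}$, and the fibre of $\phi$ over a fixed avoider is counted by \cref{lemma: binary one zero on binary words}, which is where the dependence on the position of the $0$ disappears. You instead peel off the \emph{top} level (the maximum value $M$ and its position set $F$), which forces you to enlarge the universe from sets to weakly increasing bound sequences --- a genuine and correctly identified necessity, since capping bounds at $M$ creates repeated bounds, and it is exactly weak monotonicity that makes $A_M$ a suffix so that both the count of admissible $F$ and the truncated bound sequence depend on $F$ only through $|F|$. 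Your explicit ``level decomposition'' ($e$ avoids $\tau$ iff every level word $\beta_a(e)$ does, valid because $\tau$ has all its nonzero entries equal) is a clean statement that the paper only uses implicitly at the bottom level, and your edge case $M=0$ (where $F$ must be all of $[m]$, else $T^{(M,F)}$ has a zero bound and contributes nothing) is harmless. Both arguments funnel all dependence on the location of the $0$ into the same binomial count from \cref{lemma: binary one zero on binary words}. What the paper's formulation buys is the refinement by the number of zeros and ones of $e$ itself, which is the scaffolding reused and extended in the subsequent theorems on patterns with $2$s appended or prepended; what yours buys is a single induction on length, a slightly more general statement (arbitrary weakly increasing bounds rather than strict ones), and an explicit sum-of-products recursion for $|\I_T(\pi^{(z)})|$.
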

\begin{proof}
Let $x_{S,j,k}$ denote the number of $\pi$-avoiding $S$-inversion sequences with $j$ zeros and $k$ ones, and similarly define $y_{S,j,k}$ for $\sigma$-avoidance.
We will prove the refinement that $x_{S,j,k}=y_{S,j,k}$ for all $S$, $j$, and $k$ by induction on $|S|$.

When $|S|< \ell$, the result trivially holds as all $S$-inversion sequences avoid all patterns of length $\ell$.
For the inductive step, assume the result holds for all $S$ with $|S|=n-1$; we will show the result holds for all $S$ with $|S|=n$ via a second induction on $\min S$.
For the base case $\min S = 1$, any $e\in\I_S$ has $e_1=0$, which cannot be part of a $\pi$ or $\sigma$ pattern, so $e=e_1\cdots e_n \in \I_S$ avoids $\pi$ if and only if $e_2\cdots e_n$ avoids $\pi$, and similarly for $\sigma$.
Hence, $x_{S,j,k}=x_{S\setminus\{1\},j-1,k}=y_{S\setminus\{1\},j-1,k}=y_{S,j,k}$.

Now assume the result holds for all $S$ of size $n$ with $\min S=m-1 \geq 1$; we will show the result holds for all $S$ with $S=m$.
Consider a given $S$ of size $n$ with $\min S = m \geq 2$, and let $S^- = \{s-1\mid s \in S\}$.
Define $\phi:\I_S\to\I_{S^-}$ by $\phi(e_1\cdots e_n)_i = \max\{e_i-1,0\}$.
Notice that $\pi$-avoidance and $\sigma$-avoidance are both preserved under $\phi$.

Consider a $\pi$-avoiding $S$-inversion sequence $e'$ with $j$ zeros and $k$ ones.
Then $\phi(e')$ has $k+j$ zeros.
We claim that for any $d \in \I_{S^-}(\pi)$ with $k+j$ zeros, there exist exactly $\binom{j+\min\{k,\ell-2\}}{j}$ sequences $e\in \I_S(\pi)$ with $j$ zeros and $k$ ones such that $\phi(e)=d$.
This would then show that
\[ x_{S,j,k}=\binom{j+\min\{k,\ell-2\}}{j}\sum_{i=0}^{n-k-j} x_{S^-,k+j,i}.\]
Consider some $d \in \I_{S^-}(\pi)$ with $k+j$ zeros.
As $\phi(e)=d$ and $e$ has $j$ zeros and $k$ ones, we find $e$ is completely determined apart from selecting which $k$ of the $k+j$ zeros in $d$ become ones in $e$.
As $d$ avoids $\pi$, we find that $e$ avoids $\pi$ if and only if the zeros and ones of $e$ avoid $\pi$.
By \cref{lemma: binary one zero on binary words}, we find there are $\binom{j+\min\{k,\ell-2\}}{j}$ such choices of zeros and ones of $e$ that avoid $\pi$, each yielding a distinct valid $e\in \I_S(\pi)$.

As $S^-$ satisfies the conditions of the inductive hypothesis, it now suffices to similarly show that for any $d \in \I_{S^-}(\sigma)$ with $k+j$ zeros, there exist exactly $\binom{j+\min\{k,\ell-2\}}{j}$ sequences $e \in \I_S(\sigma)$ with $j$ zeros and $k$ ones such that $\phi(e)=d$.
The argument is identical to that for $\pi$, as $\sigma$ also satisfies the conditions of \cref{lemma: binary one zero on binary words}.
This implies $x_{S,j,k}=y_{S,j,k}$ for all $j$ and $k$, and by induction, for all $S$. This completes the proof.
\end{proof}

\begin{corollary}\label{corollary: 1011 = 1101 = 1110 on S-inversion}
For any finite set $S$ of positive integers,
\[ |\I_S(1011)| = |\I_S(1101)| = |\I_S(1110)|.\]
\end{corollary}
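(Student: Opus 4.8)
The plan is to read off the result as an immediate consequence of \cref{theorem: binary non-initial zero on S-inversion}. First I would check that each of the three patterns $1011$, $1101$, $1110$ satisfies the hypotheses of that theorem: each has length $\ell = 4 \geq 3$; each has all entries in $\{0,1\}$ (so in particular $\pi_i \in \{0,1\}$ for $2 \le i \le \ell$); each has first entry equal to $1$; and each contains exactly one zero, located in position $2$, $3$, and $4$ respectively. Having verified this, I would apply \cref{theorem: binary non-initial zero on S-inversion} to the pair $(1011, 1101)$ to get $|\I_S(1011)| = |\I_S(1101)|$, and to the pair $(1101, 1110)$ to get $|\I_S(1101)| = |\I_S(1110)|$, for every finite set $S$ of positive integers. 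Chaining these two equalities yields the claim.

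There is essentially no obstacle here beyond confirming the hypothesis-checking above; the content of the corollary is entirely contained in the preceding theorem, and this statement is recorded separately only because these three specific patterns are the ones needed for the length-$4$ Wilf classification in \cref{eq:wilf}. If desired, one could also note that the $S = [n]$ case specializes to $|\I_n(1011)| = |\I_n(1101)| = |\I_n(1110)|$, establishing the first line of \cref{eq:wilf}.
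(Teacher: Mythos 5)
Your proposal is correct and matches the paper's approach exactly: the paper states this corollary without a separate proof precisely because it is an immediate consequence of \cref{theorem: binary non-initial zero on S-inversion}, and your hypothesis-checking (length $\ell=4$, first entry $1$, all entries in $\{0,1\}$, exactly one zero) is the only content needed. Chaining the pairwise equalities is exactly how the paper intends the corollary to be read.
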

This implies 1011, 1101, and 1110 are Wilf equivalent over inversion sequences.

\begin{corollary}\label{corollary: 0221 = 0212}
For $n \geq 1$,
\[ |\I_n(0221)| = |\I_n(0212)| = \sum_{S\subseteq [n-1]} |\I_S(110)|.\]
\end{corollary}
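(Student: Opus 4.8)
The plan is to strip the leading $0$ from each of $0221$ and $0212$ via \cref{theorem: 0 initial positive pattern S-inversion summation}, reduce to length-three patterns, replace those by order-isomorphic binary patterns, and then invoke \cref{theorem: binary non-initial zero on S-inversion}.

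First, both $0221$ and $0212$ have the form $\pi_1\pi_2\pi_3\pi_4$ with $\pi_1=0$ and $\pi_2,\pi_3,\pi_4>0$, so \cref{theorem: 0 initial positive pattern S-inversion summation} applies to each and gives
\[ |\I_n(0221)| = \sum_{S\subseteq[n-1]}|\I_S(221)|, \qquad |\I_n(0212)| = \sum_{S\subseteq[n-1]}|\I_S(212)|. \]
Hence it suffices to prove $|\I_S(221)| = |\I_S(212)| = |\I_S(110)|$ for every finite set $S$ of positive integers; summing over $S\subseteq[n-1]$ then yields all three claimed equalities at once.

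Next, recall that pattern containment is defined through order isomorphism $\sim$, so the avoidance class $\I_S(\pi)$ depends only on the order-isomorphism type of $\pi$. Since $221\sim110$ and $212\sim101$, we get $\I_S(221)=\I_S(110)$ and $\I_S(212)=\I_S(101)$ for every $S$, and it remains only to show $|\I_S(110)|=|\I_S(101)|$. For this I would apply \cref{theorem: binary non-initial zero on S-inversion} with $\ell=3$, $\pi=110$, and $\sigma=101$: both patterns have length $\ell=3\ge 3$, all their entries lie in $\{0,1\}$, their first entries are both $1$, and each contains exactly one $0$ (in position $3$ for $110$, in position $2$ for $101$). The theorem therefore gives $|\I_S(110)|=|\I_S(101)|$ for all finite $S$, which completes the chain $|\I_S(221)|=|\I_S(110)|=|\I_S(101)|=|\I_S(212)|$ and hence the corollary.

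I do not expect a genuine obstacle here: the substantive content is already contained in \cref{theorem: 0 initial positive pattern S-inversion summation} and \cref{theorem: binary non-initial zero on S-inversion}. The only point that warrants care is the bookkeeping of order isomorphisms — explicitly noting that $\I_S(\pi)$ is an invariant of the order-isomorphism class of $\pi$, so that the length-three patterns $221$ and $212$ arising from the reduction may legitimately be replaced by the binary patterns $110$ and $101$ to which the second theorem applies.
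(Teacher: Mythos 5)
Your proposal is correct and follows essentially the same route as the paper: the paper likewise combines \cref{theorem: 0 initial positive pattern S-inversion summation} with the instance $|\I_S(110)|=|\I_S(101)|$ of \cref{theorem: binary non-initial zero on S-inversion}, leaving the order-isomorphism bookkeeping ($221\sim110$, $212\sim101$) implicit where you spell it out. No gaps.
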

\begin{proof}
\cref{theorem: binary non-initial zero on S-inversion} implies $|\I_S(110)|=|\I_S(101)|$ for any finite set $S$ of positive integers, from which the result follows by applying \cref{theorem: 0 initial positive pattern S-inversion summation}.
\end{proof}

The following result augments the method of \cref{theorem: binary non-initial zero on S-inversion} to prove that $\pi\cdot\rho$ and $\sigma\cdot\rho$ are Wilf equivalent over $S$-inversion sequences, where $\pi$ and $\sigma$ satisfy the assumptions of \cref{theorem: binary non-initial zero on S-inversion} and $\rho$ consists only of twos.
\begin{theorem}\label{theorem: binary non-initial zero followed by twos on S-inversion}
Let $\rho=\rho_1\cdots\rho_h$ be a pattern of length $h \geq 0$ such that $\rho_i=2$ for all $i$.
Let $\pi=\pi_1\cdots\pi_\ell$ and $\sigma=\sigma_1\cdots\sigma_\ell$ be two patterns of length $\ell \geq 3$ such that $\pi_i,\sigma_i\in\{0,1\}$ for all $2 \leq i \leq \ell$ and $\pi_1=\sigma_1=1$.
If there exists exactly one $j$ such that $\pi_j=0$ and exactly one $j'$ such that $\sigma_{j'}=0$, then for any finite set $S$ of positive integers, $|\I_S(\pi\cdot \rho)|=|\I_S(\sigma\cdot \rho)|$.
\end{theorem}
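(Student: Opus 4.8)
The plan is to imitate the double induction used to prove \cref{theorem: binary non-initial zero on S-inversion}: an outer induction on $|S|$, an inner induction on $\min S$, and the same order‑preserving deflation map $\phi\colon\I_S\to\I_{S^-}$, $\phi(e)_i=\max\{e_i-1,0\}$, which preserves avoidance of both $\pi\cdot\rho$ and $\sigma\cdot\rho$. The extra feature to manage is the all‑$2$'s suffix $\rho$: an occurrence of $\pi\cdot\rho$ in $e$ is an occurrence of $\pi$ followed by $h$ positions all carrying one common value that exceeds the ``$1$‑value'' of the $\pi$‑part. So, for a value $v\ge 1$, let $Q_v(e)$ be the largest index $q$ such that some value $w\ge v$ occurs on at least $h$ positions all lying to the right of $q$ (and $Q_v(e)=0$ if no such $w$ exists), and let $\eta_v(e)$ be the number of entries $e_i<v$ with $i<Q_v(e)$. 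The statistic we will track, refining the count of $\pi\cdot\rho$‑avoiding $S$‑inversion sequences, is the number of $0$'s and the number of $1$'s (as in \cref{theorem: binary non-initial zero on S-inversion}), together with the finitely supported tuple $(\eta_v(e))_{v\ge 1}$ (finitely supported since $Q_v(e)=0$ once $v>1+\max_i e_i$).

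The first step is a characterization, established exactly as the corresponding claims in \cref{theorem: binary non-initial zero on S-inversion}: if $d=\phi(e)$ avoids $\pi\cdot\rho$, then $e$ avoids $\pi\cdot\rho$ if and only if the length‑$\eta_2(e)$ prefix of the $\{0,1\}$‑subword of $e$ avoids $\pi$. Indeed, in any occurrence of $\pi\cdot\rho$ in $e$ whose $\pi$‑part is not contained among the $\{0,1\}$‑entries of $e$, every entry involved has value at least $1$, so applying $\phi$ lowers all these values by one and produces an occurrence of $\pi\cdot\rho$ in $d$, a contradiction; hence only occurrences with $\pi$‑part inside the $\{0,1\}$‑entries remain, and such an occurrence exists precisely when an occurrence of $\pi$ among those entries fits strictly before the rightmost position from which an all‑$2$'s tail can begin, which is exactly the stated prefix condition. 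With this in hand, \cref{lemma: binary one zero on binary words} applies: for a fixed prefix length and fixed numbers of $0$'s and $1$'s, the number of admissible choices of the $\{0,1\}$‑entries (i.e.\ of binary words of the given length and number of ones whose relevant prefix avoids $\pi$) depends only on $\ell$, not on the location of the unique $0$ in $\pi$, so this count is the same for $\pi$ and for $\sigma$.

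It remains to turn this into a recursion, and this is the step I expect to be the main obstacle. Under $\phi$ the numbers of $0$'s and $1$'s merge into a single count and one checks that $\eta_v(\phi(e))=\eta_{v+1}(e)$, so the tuple $(\eta_v)$ is shifted down one level; since the statistic is \emph{not} $\phi$‑invariant, one cannot simply quote the inductive hypothesis on $\I_{S^-}$ as was done in \cref{theorem: binary non-initial zero on S-inversion}, and the real work is to show that the number of valid liftings $e$ of a given $d\in\I_{S^-}(\pi\cdot\rho)$ realizing each prescribed target type is governed only by $\pi$‑independent data — namely the type of $d$ together with $\eta_1(d)$, which pins down the governing prefix length through $\eta_2(e)=\eta_1(d)$ — and pinning down the correct refinement of \cref{lemma: binary one zero on binary words} (one that also controls where the block of ones forming a $\rho$‑tail can sit) so that this $\pi$‑independence survives is the crux. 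Granting this, the refined $\pi\cdot\rho$‑counts obey a recursion with $\pi$‑independent coefficients whose right‑hand side lives on $S^-$, which has strictly smaller $\min S$; combined with the base cases $|S|<\ell+h$ (every $S$‑inversion sequence avoids the pattern) and $\min S=1$ (delete the forced leading $0$, which alters the tracked type predictably), the induction yields equality of the refined counts for $\pi\cdot\rho$ and $\sigma\cdot\rho$ over all $S$, and summing over types gives $|\I_S(\pi\cdot\rho)|=|\I_S(\sigma\cdot\rho)|$.
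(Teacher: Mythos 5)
Your architecture matches the paper's: the same outer induction on $|S|$, inner induction on $\min S$, the same map $\phi$, and your statistic $\eta_2(e)=\eta_1(\phi(e))$ is (up to an off-by-one in whether position $Q_2(e)$ itself is included) exactly the paper's \emph{terminal $h$-repeat statistic} of $d=\phi(e)$, playing precisely the role you assign it: the length of the prefix of the $\{0,1\}$-subword of $e$ that must avoid $\pi$. So the characterization step is sound, and tracking the full tuple $(\eta_v)_{v\ge 1}$ is harmless but unnecessary, since $\eta_v(e)$ for $v\ge 2$ is already determined by the type of $d$; only $\eta_1$ needs to be carried through the induction.

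The genuine gap is the step you yourself flag and then grant: that the number of liftings $e$ of a fixed $d$ realizing each prescribed target statistic is the same for $\pi$ as for $\sigma$. Without this the induction does not close, and you do not supply the argument — you only predict that some refinement of \cref{lemma: binary one zero on binary words} ``that also controls where the block of ones forming a $\rho$-tail can sit'' should exist. No such refinement is needed, and looking for one is the wrong move. Writing $r=\eta_1(d)$, only the first $r$ entries of the binary subword of $e$ are constrained to avoid $\pi$; so one conditions on the binary word placed on the remaining $j+k-r$ zeros of $d$. That suffix word is entirely unconstrained by avoidance, and together with $r$ and the prefix's zero count it alone determines the new statistic of $e$ (if it contains at least $h$ ones, the new value is the number of zeros before its $h$-th-to-last one; otherwise it is the number of zeros in the prefix). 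Having fixed the suffix, the prefix is a binary word with prescribed numbers of zeros and ones that must avoid $\pi$, and \cref{lemma: binary one zero on binary words} as stated counts these as $\binom{j-j'+\min\{k-k',\ell-2\}}{j-j'}$, independent of where the unique $0$ sits in $\pi$. This split of the binary subword into a free, statistic-determining suffix and a $\pi$-constrained prefix is the missing idea; with it your outline becomes the paper's proof.
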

\begin{proof}
When $h=0$, the result follows from \cref{theorem: binary non-initial zero on S-inversion}.

For convenience, define $\pi'=\pi\cdot\rho$ and $\sigma'=\sigma\cdot\rho$.
We use a similar double induction approach as in \cref{theorem: binary non-initial zero on S-inversion}.
Let the \emph{terminal $h$-repeat statistic} of an $S$-inversion sequence $e$ be the largest integer $r$ such that there are at least $r$ zeros in $e$, and letting $z$ denote the index of the $r$-th zero in $e$, then there exist positive integers $z<i_1<i_2<\cdots< i_h\leq |S|$ where $e_{i_1}=e_{i_2}=\cdots=e_{i_h}>0$; if no such $r$ exists, define the terminal $h$-repeat statistic to be 0.
For example, the terminal 1-repeat statistic is simply the number of non-terminal zeros, where a terminal zero only has zeros after it, if anything.

Let $x_{S,j,k,r}$ denote the number of $\pi'$-avoiding $S$-inversion sequences with $j$ zeros, $k$ ones, and terminal $h$-repeat statistic $r$, and similarly define $y_{S,j,k,r}$ for $\sigma'$-avoidance.
We will prove the refinement that $x_{S,j,k,r}=y_{S,j,k,r}$ for all $S,j,k,r$ by induction on $|S|$.

When $|S|<\ell+h$, the result trivially holds as all $S$-inversion sequences avoid all patterns of length $\ell+h \geq 3$.
For the inductive step, assume the result holds for all $S$ with $|S|=n-1$; we will show the result holds for all $S$ with $|S|=n$ via a second induction on $\min S$.
For the base case $\min S = 1$, any $e \in \I_S$ has $e_1=0$, which cannot be part of a $\pi'$ or $\sigma'$ pattern, so $e=e_1\cdots e_n \in \I_S$ avoids $\pi'$ if and only if $e_2\cdots e_n$ avoids $\pi'$, and similarly for $\sigma'$.
Hence, $x_{S,j,k,r}=y_{S,j,k,r}$ using the inductive hypothesis for $S\setminus\{1\}$.

Now assume the result holds for all $S$ of size $n$ with $\min S = m-1 \geq 1$; we will show the result holds for all $S$ with $S=m$.
Consider a given $S$ of size $n$ with $\min S = m \geq 2$, and let $S^-=\{s-1\mid s\in S\}$.
Define $\phi$ as in \cref{theorem: binary non-initial zero on S-inversion}, and notice that $\pi'$-avoidance and $\sigma'$-avoidance are both preserved under $\phi$.

Suppose $d \in \I_{S^-}(\pi')$, and consider the $S$-inversion sequences $e\in\I_S$ such that $\phi(e)=d$.
Similarly, suppose $d' \in \I_{S^-}(\sigma')$, and consider the $S$-inversion sequences $e'\in\I_S$ such that $\phi(e')=d'$.
Suppose $d$ and $d'$ both have $j+k$ zeros and terminal $h$-repeat statistic $r$.
By the inductive hypothesis, the number of such $d$ equals the number of such $d'$.

As $d$ and $d'$ both have $j+k$ zeros, $e$ and $e'$ must each have $j+k$ total zeros and ones.
Now restrict consideration to those $e$ and $e'$ that have $j$ zeros and $k$ ones.
As $d$ is $\pi'$-avoiding, $e$ avoids $\pi'$ if and only if no $\pi$ pattern occurs within its first $r$ zero and one entries.
Similarly, $e'$ avoids $\sigma'$ if and only if no $\sigma$ pattern occurs within its first $r$ zero and one entries.

Consider some choice of zeros and ones for the last $j+k-r$ zeros of $d$ and $d'$, i.e., some binary sequence in $\{0,1\}^{j+k-r}$.
We claim that the number of $e$ whose last $j+k-r$ zeros and ones follow this binary sequence equals the number of $e'$ whose last $j+k-r$ zeros and ones follow this binary sequence.
Notice that all such $e$ and $e'$ have the same terminal $h$-repeat statistic $r'$: if the binary sequence contains at least $h$ ones, then $r'$ is the number of zeros before the $h$-th-to-last one in $e$ or respectively $e'$; otherwise, $r'$ is the number of zeros within the first $r$ zeros and ones of $e$ or respectively $e'$.
As $e$ and $e'$ both have $k$ ones and $j$ zeros, and this binary sequence fixes the terminal $h$-repeat statistic, this would be a stronger refinement that implies $x_{S,j,k,r'}=y_{S,j,k,r'}$ for all $j,k,r'$.

Suppose this binary sequence has $j'$ zeros and $k'=j+k-r-j'$ ones, where we may assume $j' \leq j$ and $k' \leq k$, as otherwise no valid $e$ or $e'$, with $k$ ones and $j$ zeros, exist.
Hence both $e$ and $e'$ must have $j-j'$ zeros and $k-k'$ ones among the positions of the first $r$ zeros in $d$ and $d'$, respectively.
These zeros and ones in $e$ must avoid $\pi$, and these zeros and ones in $e'$ must avoid $\sigma$.
Then \cref{lemma: binary one zero on binary words} implies that the number of such $e$ equals the number of such $e'$, namely equaling $\binom{j-j'+\min\{k-k',\ell-2\}}{j-j'}$.

This implies $x_{S,j,k,r'}=y_{S,j,k,r'}$ for all $j,k,r'$, and by induction, for all $S$, which completes the proof.
\end{proof}
\begin{corollary}\label{corollary: 1012 = 1102 on S-inversion}
For any finite set $S$ of positive integers,
\[ |\I_S(1012)| = |\I_S(1102)|.\]
\end{corollary}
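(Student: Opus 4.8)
The plan is to obtain this as a special case of \cref{theorem: binary non-initial zero followed by twos on S-inversion}, rather than redoing the double induction from scratch. First I would decompose the two patterns along their final entry, writing $1012 = 101 \cdot 2$ and $1102 = 110 \cdot 2$. Then I would set $\rho = 2$ (a pattern of length $h = 1$ consisting only of twos), $\pi = 101$, and $\sigma = 110$ (both of length $\ell = 3 \geq 3$, as the theorem requires).

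The only thing left to do is to check the hypotheses of \cref{theorem: binary non-initial zero followed by twos on S-inversion}, and this is immediate: $\pi_1 = \sigma_1 = 1$; the non-initial entries $\pi_2\pi_3 = 01$ and $\sigma_2\sigma_3 = 10$ are binary; and each of $\pi$ and $\sigma$ contains exactly one zero, namely $\pi_2 = 0$ and $\sigma_3 = 0$ respectively. The theorem then yields
\[ |\I_S(1012)| = |\I_S(101\cdot 2)| = |\I_S(110\cdot 2)| = |\I_S(1102)| \]
for every finite set $S$ of positive integers. Specializing to $S = [n]$ for all $n$ recovers the Wilf equivalence $1102 \equiv 1012$ asserted in \cref{eq:wilf}.

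I do not expect any genuine obstacle here: all of the real work — introducing the terminal $h$-repeat statistic, transferring along $\phi$, and running the double induction on $|S|$ and then on $\min S$ while invoking \cref{lemma: binary one zero on binary words} to count the binary refinements — was already carried out in the proof of \cref{theorem: binary non-initial zero followed by twos on S-inversion}, and this corollary is purely an instantiation of that result with the concrete data $\rho = 2$, $\pi = 101$, $\sigma = 110$.
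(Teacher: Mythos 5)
Your proposal is correct and matches the paper exactly: the corollary is stated there as an immediate instantiation of \cref{theorem: binary non-initial zero followed by twos on S-inversion} with $\pi=101$, $\sigma=110$, and $\rho=2$, and your verification of the hypotheses ($\pi_1=\sigma_1=1$, binary entries, exactly one zero in each) is exactly what is needed. Nothing further is required.
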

This implies 1012 and 1102 are Wilf equivalent over inversion sequences.

Similar to \cref{theorem: binary non-initial zero followed by twos on S-inversion}, which appends twos to $\pi$ and $\sigma$ that satisfy the assumptions of \cref{theorem: binary non-initial zero on S-inversion}, the following result augments the method of \cref{theorem: binary non-initial zero on S-inversion} to prove that $\rho\cdot\pi$ and $\rho\cdot\sigma$ are Wilf equivalent over $S$-inversion sequences, where $\pi$ and $\sigma$ satisfy the assumptions of \cref{lemma: binary one zero on binary words} and $\rho$ consists only of twos.

\begin{theorem}\label{theorem: twos followed by binary one zero on S-inversion}
Let $\rho=\rho_1\cdots\rho_h$ be a pattern of length $h \geq 1$ such that $\rho_i = 2$ for all $i$.
Let $\pi=\pi_1\cdots\pi_\ell$ and $\sigma=\sigma_1\cdots\sigma_\ell$ be two patterns of length $\ell \geq 2$ such that $\pi_i, \sigma_i \in \{0,1\}$ for all $i\in[\ell]$, and there exists exactly one $j$ such that $\pi_j=0$ and exactly one $j'$ such that $\sigma_{j'}=0$.
Then for any finite set $S$ of positive integers, $|\I_S(\rho\cdot\pi)|=|\I_S(\rho\cdot\sigma)|$.
\end{theorem}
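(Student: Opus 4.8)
The plan is to run the double induction of \cref{theorem: binary non-initial zero followed by twos on S-inversion}, but tracking an \emph{initial} rather than a terminal $h$-repeat statistic, since the block $\rho$ of twos now precedes the binary part. Write $\pi'=\rho\cdot\pi$ and $\sigma'=\rho\cdot\sigma$, both of length $\ell+h$, and for an $S$-inversion sequence $e$ let its \emph{initial $h$-repeat statistic} $\iota(e)$ be the number of zeros of $e$ lying strictly to the right of the first position completing an $h$-fold repeat of a positive value; equivalently, the number of zeros $z$ of $e$ for which some positive value occurs at least $h$ times among the entries preceding $z$. Let $x_{S,j,k,r}$ count the $\pi'$-avoiding $S$-inversion sequences with $j$ zeros, $k$ ones and $\iota=r$, and let $y_{S,j,k,r}$ be the analogue for $\sigma'$. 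I will prove $x_{S,j,k,r}=y_{S,j,k,r}$ for all $S,j,k,r$ by induction on $|S|$, with an inner induction on $\min S$. The case $|S|<\ell+h$ is vacuous, and in the inner base case $\min S=1$ the entry $e_1=0$ cannot lie in any $\pi'$- or $\sigma'$-occurrence (the pattern opens with its largest symbol), so deleting it---which leaves $\iota$ unchanged---reduces the claim to $S\setminus\{1\}$ via the outer hypothesis.

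For the inner step, fix $S$ with $\min S=m\ge 2$, put $S^-=\{s-1:s\in S\}$, and use the collapse $\phi(e)_i=\max\{e_i-1,0\}$. First, $\phi$ preserves $\pi'$- and $\sigma'$-avoidance: a $\pi'$-occurrence in $\phi(e)$ has its $h$ equal ``twos'' at some level $t\ge 1$ and its binary part at two levels, the larger of which is some $t''$ with $1\le t''<t$; since $t,t''\ge 1$ the $\phi$-preimages of those entries in $e$ are forced up to $t+1$ and $t''+1$, while the preimage of the unique smallest symbol stays strictly below $t''+1$, so the occurrence lifts to $e$. Hence we may sum over $d\in\I_{S^-}(\pi')$ and count $\pi'$-avoiding fibers. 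A preimage $e$ of $d$ amounts to choosing which of the $z:=j+k$ zeros of $d$ to raise to ones, and the structural claim, proved by a dichotomy, is that $e$ avoids $\pi'$ if and only if the binary word formed by the $e$-bits on the \emph{late} zeros of $d$---those lying after the first $h$-fold repeat of a positive value of $d$, of which there are exactly $\iota(d)$---avoids $\pi$. Indeed, any $\pi'$-occurrence in $e$ whose binary part uses a symbol at least $2$ descends under $\phi$ to a (forbidden) $\pi'$-occurrence in $d$, so every occurrence has its binary part confined to $\{0,1\}$; such occurrences are exactly the $\pi$-patterns among the $0/1$-entries of $e$ that are preceded by $h$ equal entries each at least $2$, and because this ``preceded by'' condition is monotone in position, those entries are precisely the late zeros of $d$.

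It remains to count, for a fixed $d$ with $\iota(d)=\iota_0$, the $\pi'$-avoiding preimages with $j$ zeros, $k$ ones and $\iota(e)=r$, and to check that this count depends on $d$ only through $\iota_0$ (and $j,k,r,\ell$). Partition the zeros of $d$ into the $z-\iota_0$ \emph{early} ones---all at positions at most the first $h$-fold positive repeat of $d$---and the $\iota_0$ \emph{late} ones; let $b\in\{0,1\}^{z-\iota_0}$ record the bits chosen on the early zeros and $q$ the number of ones chosen on the late zeros. The identity I aim to verify is $\iota(e)=f(b)+\iota_0-q$, where $f(b)$ is the number of $0$'s of $b$ occurring after its $h$-th one, and $f(b)=0$ if $b$ contains fewer than $h$ ones: the point is that the early zeros of $d$ all sit at positions at most the first $h$-fold repeat in $e$ of a value at least $2$, so the statistic can only ``move'' through an $h$-fold repeat of the value $1$ completed among the early promotions---a position read off from $b$ alone---whence the zeros of $e$ to the right of the first $h$-fold positive repeat of $e$ are exactly the late zeros kept at $0$ (there are $\iota_0-q$) together with the $0$'s of $b$ after its $h$-th one (there are $f(b)$). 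Given this, fixing $b$ forces $q=f(b)+\iota_0-r$, and \cref{lemma: binary one zero on binary words} counts the $\pi$-avoiding placements of $q$ ones among the $\iota_0$ late zeros as $\binom{(\iota_0-q)+\min\{q,\ell-2\}}{\iota_0-q}$, independent of which single-zero binary pattern of length $\ell$ is used; summing over $b$ yields a coefficient $N(\iota_0;j,k,r)$ depending only on $\iota_0,j,k,r,\ell$. Therefore $x_{S,j,k,r}=\sum_{\iota_0}N(\iota_0;j,k,r)\sum_{k'}x_{S^-,\,j+k,\,k',\,\iota_0}$, and identically for $y$, so applying the inner hypothesis to $S^-$ finishes the induction. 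I expect the identity $\iota(e)=f(b)+\iota_0-q$ to be the crux: one must check that a fiber element's initial $h$-repeat statistic is pinned down by the early word together with the number of late promotions, with no residual dependence on where the positive entries of $d$ actually sit.
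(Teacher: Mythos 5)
Your proposal is correct and takes essentially the same route as the paper's proof: the same initial $h$-repeat statistic, the same double induction with the collapse map $\phi$, and the same application of \cref{lemma: binary one zero on binary words} to count the fibers over a fixed $d$. You merely spell out explicitly (via the identity $\iota(e)=f(b)+\iota_0-q$) the step that the paper summarizes as ``fixing this binary sequence fixes the initial $h$-repeat statistic of $e$ and $e'$.''
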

\begin{proof}
For convenience, define $\pi'=\rho\cdot\pi$ and $\sigma'=\rho\cdot\sigma$.
We use an almost identical approach as in \cref{theorem: binary non-initial zero followed by twos on S-inversion}, reversing the definition of the terminal $h$-repeat statistic.
Let the \emph{initial $h$-repeat statistic} of an $S$-inversion sequence $e$ be the largest integer $r$ such that there are at least $r$ zeros in $e$, and letting $z$ denote the index of the $r$-th-to-last zero in $e$, then there exist positive integers $1 \leq i_1 < i_2 < \cdots < i_h < z$ where $e_{i_1}=e_{i_2}=\cdots=e_{i_h}>0$; if no such $r$ exists, define the initial $h$-repeat statistic to be 0.
For example, the initial 1-repeat statistic is simply the number of non-initial zeros, where an initial zero only has zeros before it, if anything.

Let $x_{S,j,k,r}$ denote the number of $\pi'$-avoiding $S$-inversion sequences with $j$ zeros, $k$ ones, and initial $h$-repeat statistic $r$, and similarly define $y_{S,j,k,r}$ for $\sigma'$-avoidance.
We will prove the refinement that $x_{S,j,k,r}=y_{S,j,k,r}$ for all $S,j,k,r$ by induction on $|S|$.

The proof then follows the same reasoning as that of \cref{theorem: binary non-initial zero followed by twos on S-inversion}.
For sake of brevity and clarity, we comment on some of the minor differences between the proofs.
We assume $h \geq 1$ so that neither $\pi'$ and $\sigma'$ start with a 0, allowing the base case $\min S = 1$ for the second induction to hold; this in turn allows us to lift the restriction that $\pi_1=\sigma_1=1$.
Using the same notation as in the proof of \cref{theorem: binary non-initial zero followed by twos on S-inversion}, the characterization of $e$ becomes as follows: $e$ avoids $\pi'$ if and only if no $\pi$ pattern occurs within its last $r$ zero and one entries, and similarly for $e'$ avoiding $\sigma'$.
We then consider some binary sequence for the first $j+k-r$ zeros of $d$ and $d'$, as opposed to the last, where fixing this binary sequence fixes the initial $h$-repeat statistic of $e$ and $e'$, so the proof proceeds identically.
\end{proof}

\begin{corollary}\label{corollary: 2011 = 2101 = 2110 on S-inversion}
For any finite set $S$ of positive integers,
\[ |\I_S(2011)| = |\I_S(2101)| = |\I_S(2110)|.\]
\end{corollary}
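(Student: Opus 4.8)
The plan is to obtain this corollary as an immediate consequence of \cref{theorem: twos followed by binary one zero on S-inversion}. Set $\rho=2$, so that $h=1\geq 1$ and $\rho_1=2$, and observe that each of the three patterns in question factors as $\rho\cdot\pi$: namely $2011=\rho\cdot 011$, $2101=\rho\cdot 101$, and $2110=\rho\cdot 110$. Each of the suffixes $011$, $101$, $110$ is a pattern of length $\ell=3\geq 2$ whose entries all lie in $\{0,1\}$ and which contains exactly one zero, so any pair chosen from $\{011,101,110\}$ satisfies the hypotheses imposed on $\pi$ and $\sigma$ in that theorem.

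Applying \cref{theorem: twos followed by binary one zero on S-inversion} to the pair $(\pi,\sigma)=(011,101)$ gives $|\I_S(2011)|=|\I_S(2101)|$ for every finite set $S$ of positive integers, and applying it to $(\pi,\sigma)=(011,110)$ gives $|\I_S(2011)|=|\I_S(2110)|$. Chaining these two equalities yields $|\I_S(2011)|=|\I_S(2101)|=|\I_S(2110)|$, as desired; specializing to $S=[n]$ then recovers the Wilf equivalence $2011\equiv 2101\equiv 2110$ recorded in \cref{eq:wilf}.

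There is essentially no obstacle at this stage: all of the combinatorial work — the binary-word count of \cref{lemma: binary one zero on binary words}, the map $\phi$, and the double induction on $|S|$ and $\min S$ refined by the zero count, the one count, and the initial $h$-repeat statistic — has already been carried out in the proof of \cref{theorem: twos followed by binary one zero on S-inversion}. The only thing to check here is the bookkeeping that the patterns $2011$, $2101$, $2110$ have the required shape $\rho\cdot\pi$ with $\rho$ a single two and $\pi$ a length-three binary pattern with a unique zero, which is immediate.
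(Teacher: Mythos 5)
Your proposal is correct and matches the paper's intended derivation: the corollary is stated immediately after \cref{theorem: twos followed by binary one zero on S-inversion} precisely as the specialization $\rho=2$, $\pi,\sigma\in\{011,101,110\}$, and your verification of the hypotheses ($h=1\geq 1$, $\ell=3\geq 2$, binary entries, unique zero) is exactly the bookkeeping required.
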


This implies 2011, 2101, and 2110 are Wilf equivalent over inversion sequences.

\begin{corollary}\label{corollary: 2201 = 2210 on S-inversion}
For any finite set $S$ of positive integers,
\[ |\I_S(2201)| = |\I_S(2210)|.\]
\end{corollary}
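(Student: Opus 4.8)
The plan is to deduce this directly from \cref{theorem: twos followed by binary one zero on S-inversion}. Write $2201 = \rho\cdot\pi$ and $2210 = \rho\cdot\sigma$ with $\rho = 22$, so $h = 2 \geq 1$ and every entry of $\rho$ equals $2$; take $\pi = 01$ and $\sigma = 10$, which are patterns of length $\ell = 2 \geq 2$ with all entries in $\{0,1\}$, each containing exactly one zero. These are precisely the hypotheses of \cref{theorem: twos followed by binary one zero on S-inversion}, so applying it gives $|\I_S(2201)| = |\I_S(\rho\cdot\pi)| = |\I_S(\rho\cdot\sigma)| = |\I_S(2210)|$ for every finite set $S$ of positive integers. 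There is essentially no obstacle here: the statement is exactly the $\rho = 22$, $\{\pi,\sigma\} = \{01,10\}$ instance of that theorem, and all the real work (the double induction on $|S|$ and then on $\min S$, and the counting of binary fillings via \cref{lemma: binary one zero on binary words}) has already been carried out.

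If one instead wanted a self-contained argument, the route would be to rerun the double-induction scheme of \cref{theorem: twos followed by binary one zero on S-inversion} in this concrete case: stratify the $2201$-avoiding and $2210$-avoiding $S$-inversion sequences by their number of zeros, number of ones, and initial $2$-repeat statistic; push forward along the shift $\phi(e)_i = \max\{e_i-1,0\}$ to $S^-$-inversion sequences; and verify that, once the preimage and the binary sequence recording the first few zeros are fixed, the number of lifts is a binomial coefficient depending only on those three statistics. Here \cref{lemma: binary one zero on binary words} is used with $\ell = 2$, where $\min\{k,\ell-2\} = 0$, so each relevant binary count is $\binom{j}{j} = 1$ — reflecting that the only length-$(j+k)$ binary word with $j$ zeros, $k$ ones avoiding $01$ (resp.\ $10$) is $1^k0^j$ (resp.\ $0^j1^k$). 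But since this reproduces the cited theorem verbatim, the one-line deduction above is the natural proof.
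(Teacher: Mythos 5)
Your proposal is correct and matches the paper's approach exactly: the corollary is stated as an immediate consequence of \cref{theorem: twos followed by binary one zero on S-inversion} with $\rho=22$, $\pi=01$, $\sigma=10$, and the hypotheses ($h\geq 1$, $\ell\geq 2$, entries in $\{0,1\}$ with exactly one zero each) are verified just as you describe. The additional remarks about the $\ell=2$ specialization of \cref{lemma: binary one zero on binary words} are accurate but not needed.
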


This implies 2201 and 2210 are Wilf equivalent over inversion sequences.

\begin{theorem}\label{theorem: 23 followed by binary one zero on S-inversion}
Let $\pi=\pi_1\cdots\pi_\ell$ and $\sigma=\sigma_1\cdots\sigma_\ell$ be two patterns of length $\ell \geq 2$ such that $\pi_i,\sigma_i\in\{0,1\}$ for all $i \in [\ell]$, and there exists exactly one $j$ such that $\pi_j=0$ and exactly one $j'$ such that $\sigma_{j'}=0$.
Then for any finite set $S$ of positive integers, $|\I_S(23\cdot\pi)| = |\I_S(23\cdot\sigma)|$.
\end{theorem}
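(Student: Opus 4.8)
The plan is to repeat, essentially verbatim, the double-induction argument of \cref{theorem: twos followed by binary one zero on S-inversion}, the only change being that the prefix $23$ records a strict ascent of two positive values in place of $h$ equal positive values. Write $\pi'=23\cdot\pi$ and $\sigma'=23\cdot\sigma$, and keep the decrement map $\phi\colon\I_S\to\I_{S^-}$, $\phi(e)_i=\max\{e_i-1,0\}$, where $S^-=\{s-1:s\in S\}$. First I would record two facts about $\phi$, the analogues of those used in the earlier proofs. (i) If $\phi(e)$ contains $\pi'$, then so does $e$: in an occurrence of $\pi'=23\cdot\pi$ in $\phi(e)$, the ``$\pi$''-positions share a common value $w$ with the single smaller entry strictly below it, forcing $w\ge 1$, and the ``$23$''-positions carry values $\ge 2$; lifting each entry by $1$ (and the $0$-level entry of the ``$\pi$''-part arbitrarily) reproduces a genuine $\pi'$-occurrence in $e$, and likewise for $\sigma'$. (ii) If $d=\phi(e)$ \emph{avoids} $\pi'$, then in $e$ every occurrence of $\pi'$ has its ``$\pi$''-part entirely among the entries equal to $0$ or $1$ (the $\ell-1$ repeated entries being $1$ and the single smaller entry being $0$) and its ``$23$''-part among the entries $\ge 2$: if the repeated ``$\pi$''-value were $u\ge 2$, decrementing would exhibit in $d$ a $\pi$-occurrence at value $u-1\ge 1$ preceded by a strict ascent among values $\ge 1$, i.e.\ an occurrence of $\pi'$ in $d$, contradicting the hypothesis.

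Next I would set up the induction on $|S|$ for the refinement $x_{S,j,k,r}=y_{S,j,k,r}$, where $x_{S,j,k,r}$ (resp.\ $y_{S,j,k,r}$) counts the $\pi'$-avoiding (resp.\ $\sigma'$-avoiding) $S$-inversion sequences with $j$ zeros, $k$ ones, and statistic $r$ equal to the exact analogue of the initial $h$-repeat statistic: the largest $r$ such that $e$ has at least $r$ zeros and, writing $z$ for the index of the $r$-th-from-last zero, there exist $i_1<i_2<z$ with $0<e_{i_1}<e_{i_2}$ (and $r=0$ if no such $r$ exists). The case $|S|<\ell+2$ is immediate since every such $S$-inversion sequence avoids patterns of length $\ell+2$. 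For the inductive step one runs a secondary induction on $\min S$; the base case $\min S=1$ holds because $e_1=0$ cannot belong to an occurrence of $\pi'$ or $\sigma'$ -- being the first entry it would have to play the role of the pattern value $2$, which is strictly larger than a later pattern value $0$, impossible as $e_1$ is the global minimum -- so stripping the leading $0$ reduces to $S\setminus\{1\}$. No hypothesis $\pi_1=1$ is needed here, precisely because $23\cdot\pi$ does not begin with $0$.

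In the main step, with $\min S=m\ge 2$ and $S^-=\{s-1:s\in S\}$, one fixes $d\in\I_{S^-}(\pi')$ -- the number of such $d$ with a prescribed number of zeros and prescribed statistic being controlled by the secondary induction hypothesis for $S^-$, which has $\min S^-=m-1$ -- and counts the $\phi$-preimages $e$ of $d$ that avoid $\pi'$ and have prescribed numbers of zeros and ones. By fact (ii), such an $e$ avoids $\pi'$ if and only if the binary word formed by the $0$- and $1$-entries of $e$ lying after the completion of the earliest strict ascent among entries $\ge 2$ -- a position determined by $d$ alone, which splits the $0$- and $1$-entries of $e$ into a ``free'' prefix block and a ``constrained'' suffix block -- avoids $\pi$ as a binary word. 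Fixing the labels of the free block pins down the statistic $r$ of $e$ (via whether that block contains an ascent, exactly as fixing the binary sequence pins down the repeat statistic in the proof of \cref{theorem: twos followed by binary one zero on S-inversion}), and \cref{lemma: binary one zero on binary words} counts the legal labelings of the constrained block by a binomial coefficient depending only on $\ell$ and the two block sizes -- hence the same for $\pi$ and for $\sigma$. Summing over $d$ and over the free block gives $x_{S,j,k,r}=y_{S,j,k,r}$, completing the induction; specializing to $S=[n]$, $\ell=2$, $\pi=01$, $\sigma=10$ in particular gives $2301\equiv 2310$. The main obstacle I anticipate is purely bookkeeping: confirming that the ``ascent'' statistic transforms correctly under $\phi$ and meshes with the free/constrained split, in particular absorbing the harmless edge case of an ascent whose smaller value is $1$ (which cannot be completed to a genuine $23\cdot\pi$-occurrence, since a binary ``$\pi$''-part already uses the value $1$), just as the earlier proofs tolerate $h$-repeats at value $1$.
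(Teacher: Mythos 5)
Your overall strategy---the same double induction on $|S|$ and $\min S$ with the decrement map $\phi$, a statistic recording where the first ``$23$''-ascent is completed, and \cref{lemma: binary one zero on binary words} to count the constrained block---is exactly the paper's, and your facts (i) and (ii) about $\phi$ are correct. But there is a genuine gap in the refinement you induct on, and it is precisely the point you dismiss at the end as ``purely bookkeeping.'' Your statistic $r$ alone is too coarse for the induction to close. In the inductive step you need the statistic of the lifted sequence $e$ to be determined by the statistic of $d=\phi(e)$ together with the binary word chosen for the free prefix block. It is not: the statistic of $e$ concerns ascents $0<e_{i_1}<e_{i_2}$ among \emph{all} positive entries of $e$, and a newly created one (an entry of $e$ at level $1$, which was a zero of $d$) followed by some entry $\ge 2$ of $e$ creates such an ascent. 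Whether such a later entry $\ge 2$ exists before the relevant zero depends on where the positive entries of $d$ sit \emph{relative to its zeros}, which is not recorded by $d$'s zero-count and statistic. So two sequences $d\in\I_{S^-}(\pi')$ and $d'\in\I_{S^-}(\sigma')$ matched by your inductive hypothesis can distribute their $\phi$-preimages differently across the values of $r$, and the claimed equality $x_{S,j,k,r}=y_{S,j,k,r}$ does not follow.

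The paper repairs exactly this by refining further: in addition to the ``initial non-inversion statistic'' $z$ (the analogue of your $r$), it tracks the \emph{initial positive set} $P$, recording between which consecutive initial zeros a positive entry occurs. With $(z,P)$ in hand, the new pair $(z',P')$ of $e$ is uniquely determined by $(z,P)$ and the binary word on the free block (the paper's case analysis on whether the binary word contains a one and where the first one falls relative to $P$), so the stronger refinement $x_{S,j,k,z,P}=y_{S,j,k,z,P}$ is self-consistent under the lift and the induction goes through. To complete your argument you would need to add this extra invariant (or an equivalent one); without it the inductive step fails, even though every other ingredient of your proposal is sound. Note this is also why the case of a $23$-prefix is genuinely harder than the all-twos prefix of \cref{theorem: twos followed by binary one zero on S-inversion}, where a single count suffices because equal repeated values, unlike ascents, cannot be created by promoting a zero to a one together with one pre-existing positive entry.
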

\begin{proof}
For convenience, define $\pi'=23\cdot\pi$ and $\sigma'=23\cdot\sigma$.
We use a similar double induction approach as in \cref{theorem: twos followed by binary one zero on S-inversion}.

Let the \emph{initial non-inversion statistic} of an $S$-inversion sequence $e$ be the largest integer $z$ such that there are at least $z$ zeros in $e$, and there does not exist two elements $0 < e_{i_1} < e_{i_2}$ of $e$ where $i_1 < i_2$ and both come before the $z$-th zero in $e$; this statistic can equal zero.
Furthermore, let the \emph{initial positive set} of an $S$-inversion sequence $e$ be the set $P$ of integers $i$ for $0 \leq i < z$, where $z$ is the initial non-inversion statistic of $e$, such that there exists a positive element between the $i$-th and $(i+1)$-th zeros of $e$, where ``between the zeroth and first zeros of $e$" is interpreted to mean before the first zero of $e$.

Let $x_{S,j,k,z,P}$ denote the number of $\pi'$-avoiding $S$-inversion sequences with $j$ zeros, $k$ ones, initial non-inversion statistic $z$, and initial positive set $P$, and similarly define $y_{S,j,k,z,P}$ for $\sigma'$-avoidance.
We will prove the refinement that $x_{S,j,k,z,P}=y_{S,j,k,z,P}$ for all $S,j,k,z,P$ by induction on $|S|$.

The initial argument, from the base cases of $|S|<\ell+2$ up to the beginning of the second inductive step using $\phi$, follow the same reasoning as in \cref{theorem: twos followed by binary one zero on S-inversion}.
We use the same definitions for $S^-$ and $\phi$, where we note that $\pi'$-avoidance and $\sigma'$-avoidance are both preserved under $\phi$.

Suppose $d \in \I_{S^-}(\pi')$, and consider the $S$-inversion sequences $e \in \I_S$ such that $\phi(e)=d$.
Similarly, suppose $d' \in \I_{S^-}(\sigma')$, and consider the $S$-inversion sequences $e' \in \I_S$ such that $\phi(e')=d'$.
Suppose $d$ and $d'$ both have $j+k$ zeros, initial non-inversion statistic $z$, and initial positive set $P$.
By the inductive hypothesis, the number of such $d$ equals the number of such $d'$.

As $d$ and $d'$ both have $j+k$ zeros, $e$ and $e'$ must each have $j+k$ total zeros and ones.
Now restrict consideration to those $e$ and $e'$ that have $j$ zeros and $k$ ones.
As $d$ is $\pi'$-avoiding, $e$ avoids $\pi'$ if and only if no $\pi$ pattern occurs within its last $j+k-z$ zero and one entries.
Similarly, $e'$ avoids $\sigma'$ if and only if no $\sigma$ pattern occurs within its last $j+k-z$ zero and one entries.

Consider some choice of zeros and ones for the first $z$ zeros of $d$ and $d'$, i.e., some binary sequence in $\{0,1\}^z$.
We claim that the number of $e$ whose first $z$ zeros and ones follow this binary sequence equals the number of $e'$ whose first $z$ zeros and ones follow this binary sequence.
However, we first show that all such $e$ and $e'$ have the same initial non-inversion statistic $z'$ and initial positive set $P'$, as then the claim yields a stronger refinement that implies $x_{S,j,k,z',P'}=y_{S,j,k,z',P'}$ for all $j,k,z',P'$.

If the binary sequence contains no ones, then $z'=z$ and $P'=P$.
Otherwise, the ones in this binary sequence will cause $z' < z$ and may cause $P'$ to change.
If $P$ is empty, then notice $z'$ is simply the number of zeros in this binary sequence, and $P'$ is defined according to which zeros in the binary sequence have ones in between them.
If $P$ is non-empty, suppose $i_1$ is the index of the first one in the binary sequence; and let $i_2$ be the minimum element of $(P\cup\{z\})\cap[i_1,\infty)$.
Then $z'$ is the number of zeros within the first $i_2$ elements of the binary sequence, and $P'$ is uniquely determined from $P$ and the binary sequence.
Hence, given $z$, $P$, and the binary sequence, $z'$ and $P'$ are uniquely determined, as desired.

We conclude the proof by proving our claim that the number of such $e$ equals the number of such $e'$.
Suppose this binary sequence has $j'\leq j$ zeros and $k'=z-j' \leq k$ ones.
Then both $e$ and $e'$ have $j-j'$ zeros and $k-k'$ ones among the positions of the last $j+k-z$ zeros in $d$ and $d'$, respectively.
These zeros and ones in $e$ must avoid $\pi$, and these zeros and ones in $e'$ must avoid $\sigma$.
\cref{lemma: binary one zero on binary words} implies that the number of such $e$ equals the number of such $e'$, namely equaling $\binom{j-j'+\min\{k-k',\ell-2\}}{j-j'}$.

This implies $x_{S,j,k,z',P'}=y_{S,j,k,z',P'}$ for all $j,k,z',P'$, and by induction, for all $S$, which completes the proof.
\end{proof}

\begin{corollary}\label{corollary: 2301 = 2310 on S-inversion}
For any finite set $S$ of positive integers,
\[ |\I_S(2301)| = |\I_S(2310)|.\]
\end{corollary}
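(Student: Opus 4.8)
The plan is to obtain this as an immediate specialization of \cref{theorem: 23 followed by binary one zero on S-inversion}. That theorem asserts that for any two patterns $\pi = \pi_1\cdots\pi_\ell$ and $\sigma = \sigma_1\cdots\sigma_\ell$ of length $\ell \geq 2$ with all entries in $\{0,1\}$, each containing exactly one zero, one has $|\I_S(23\cdot\pi)| = |\I_S(23\cdot\sigma)|$ for every finite set $S$ of positive integers. So the entire task reduces to exhibiting $2301$ and $2310$ as patterns of the form $23\cdot\pi$ and $23\cdot\sigma$ meeting these hypotheses.

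First I would take $\ell = 2$ (the minimal length the theorem permits), $\pi = 01$, and $\sigma = 10$. Both patterns have all entries in $\{0,1\}$, and each contains exactly one zero, so the hypotheses of \cref{theorem: 23 followed by binary one zero on S-inversion} are satisfied. Since $23\cdot 01 = 2301$ and $23\cdot 10 = 2310$, the theorem immediately yields $|\I_S(2301)| = |\I_S(2310)|$ for every finite set $S$ of positive integers, which is exactly the claim. There is no genuine obstacle here: all of the difficulty has already been absorbed into the proof of \cref{theorem: 23 followed by binary one zero on S-inversion}, whose double induction (first on $|S|$, then on $\min S$, via the map $\phi$), together with the binary-word enumeration of \cref{lemma: binary one zero on binary words} and the bookkeeping of the initial non-inversion statistic and initial positive set, does all the heavy lifting. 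The only points needing even a moment's attention are that length-$2$ patterns are allowed by the theorem and that the concatenations $23\cdot 01$ and $23\cdot 10$ really are $2301$ and $2310$ — both of which are immediate.
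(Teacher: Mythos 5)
Your proposal is correct and is exactly the derivation the paper intends: the corollary is stated as an immediate specialization of \cref{theorem: 23 followed by binary one zero on S-inversion} with $\ell=2$, $\pi=01$, $\sigma=10$, so that $23\cdot 01=2301$ and $23\cdot 10=2310$. Your verification that these choices satisfy the theorem's hypotheses is all that is needed.
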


This implies 2301 and 2310 are Wilf equivalent over inversion sequences.

\subsection{Wilf equivalences by characterization}

For a sequence $e_1\cdots e_n$ of nonnegative integers, a position $j \in [n]$ is a \emph{weak left-to-right maximum} if $e_i \leq e_j$ for all $i < j$.
We use this definition to characterize 3210 and 3201-avoiding inversion sequences, allowing us to construct an explicit bijection between $\I_n(3210)$ and $\I_n(3201)$.
First, we characterize 3210-avoiding inversion sequences.
\begin{lemma}\label{3210 characterization}
The 3210-avoiding inversion sequences are precisely those that can be partitioned into three
weakly increasing subsequences.
\end{lemma}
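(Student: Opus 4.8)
The plan is to read this statement as a concrete, one‑line special case of Dilworth's theorem, made effective by a greedy labeling. The first step is to translate the avoidance condition: since the pattern $3210$ has four distinct letters, an inversion sequence $e=e_1\cdots e_n$ contains $3210$ exactly when there are indices $i_1<i_2<i_3<i_4$ with $e_{i_1}>e_{i_2}>e_{i_3}>e_{i_4}$, i.e., a strictly decreasing subsequence of length $4$. So ``$e$ avoids $3210$'' is synonymous with ``$e$ has no strictly decreasing subsequence of length $4$.'' Note that nothing in what follows uses the constraint $0\le e_i<i$, so the equivalence in fact holds for arbitrary integer sequences.

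For the direction ``partitionable into three weakly increasing subsequences $\Rightarrow$ avoids $3210$,'' I would argue as follows. Suppose the entries of $e$ are partitioned among three index sets $A_1,A_2,A_3$, each indexing a weakly increasing subsequence. If $e_{i_1}>e_{i_2}>e_{i_3}>e_{i_4}$ with $i_1<i_2<i_3<i_4$, then by pigeonhole two of these indices, say $i_a<i_b$, lie in the same $A_t$; but weak monotonicity of the $A_t$-subsequence forces $e_{i_a}\le e_{i_b}$, contradicting $e_{i_a}>e_{i_b}$. Hence no strictly decreasing subsequence of length $4$ exists, so $e$ avoids $3210$.

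For the converse, assume $e$ avoids $3210$. For each index $i$ let $\ell(i)$ denote the maximum length of a strictly decreasing subsequence of $e$ ending at position $i$; since the single entry $e_i$ gives such a subsequence of length $1$ and, by hypothesis, none has length $4$, we have $\ell(i)\in\{1,2,3\}$ for all $i$. The crux of the proof is to check that, for each $c\in\{1,2,3\}$, the set of indices $i$ with $\ell(i)=c$, listed in increasing order, indexes a weakly increasing subsequence: if $i<j$ with $\ell(i)=\ell(j)=c$ but $e_i>e_j$, then appending $e_j$ to a longest strictly decreasing subsequence ending at $i$ produces a strictly decreasing subsequence ending at $j$ of length $c+1$, contradicting $\ell(j)=c$. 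Since the level sets $\ell^{-1}(1),\ell^{-1}(2),\ell^{-1}(3)$ partition $[n]$, we obtain (padding with empty subsequences when some level set is empty, or when $n=0$) the required partition of $e$ into three weakly increasing subsequences.

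The argument is short and I do not expect a genuine obstacle; the only points requiring a little care are bookkeeping ones: reading the failure of weak monotonicity across a step correctly as a strict decrease, confirming that $\ell(i)\le 3$ is precisely the avoidance hypothesis, and handling the degenerate cases in which fewer than three nonempty parts arise. It may also be worth recording, since the same proof yields it for free, the more general statement that for each $m\ge 1$ the sequences with no strictly decreasing subsequence of length $m+1$ are exactly those partitionable into $m$ weakly increasing subsequences, the present lemma being the case $m=3$.
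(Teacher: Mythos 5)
Your proposal is correct. The forward direction is word-for-word the paper's argument: pigeonhole on the three parts against a strictly decreasing length-$4$ subsequence. For the converse, the paper instead peels off the weak left-to-right maxima, then the weak left-to-right maxima of what remains (the ``weak 2nd left-to-right maxima''), and verifies that the residue is weakly increasing by invoking the avoidance hypothesis directly: for $i<j$ in the third part it produces $u<v<i$ with $e_u>e_v>e_i$ and concludes $e_i\le e_j$ to dodge a $3210$. Your layering by the statistic $\ell(i)$ (longest strictly decreasing subsequence ending at $i$) in fact produces the very same partition --- one checks $\ell(j)=c$ exactly when $j$ survives $c-1$ rounds of peeling weak maxima --- but your verification is cleaner: the level sets of $\ell$ are weakly increasing for \emph{any} integer sequence, and the avoidance hypothesis enters only to bound $\ell\le 3$. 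This buys you, at no extra cost, the general statement that avoiding the strictly decreasing pattern of length $m+1$ is equivalent to partitionability into $m$ weakly increasing subsequences; the paper's formulation, on the other hand, sets up the notions of weak and weak 2nd left-to-right maxima that it reuses in the characterization of $3201$-avoiders and in the bijection proving $3210\equiv 3201$, so the two presentations are adapted to different ends.
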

\begin{proof}
Suppose $e\in\I_n$ has such a partition $e_{x_1} \le e_{x_2} \le \cdots \le e_{x_t}$, $e_{y_1} \le e_{y_2} \le\cdots \le e_{y_{r}}$, and $e_{z_1} \le e_{z_2} \le\cdots \le e_{z_{n-t-r}}$.
If there exist $i<j<k<\ell $ such that $e_i >e_j>e_k>e_{\ell}$, then no two of $i,j,k,\ell$ can both be in any of the three sets $\{x_1,\ldots, x_t\}$, $\{y_1,\ldots, y_r\}$, and $\{z_1,\ldots, z_{n-t-r}\}$, but this is impossible due to the Pigeonhole principle.
Therefore, $e$ avoids 3210.
Conversely, if $e$ is 3210-avoiding, let $x = (x_1,\ldots, x_t)$ be the sequence of weak left-to-right maxima of $e$.
Then $e_{x_1} \le e_{x_2} \le \cdots \le e_{x_t}$.
We then let $y = (y_1,\ldots, y_r)$ be the sequence of weak left-to-right maxima of the sequence obtained by deleting positions $\{x_1,\ldots, x_t\}$ from $e$, and in general we call these positions \emph{weak 2nd left-to-right maxima}.
Similarly $e_{y_1} \le e_{y_2} \le \cdots \le e_{y_r}$.
We then consider the remaining terms of the sequence, and take $i,j \not \in (\{x_1,\ldots, x_t\})\cup (\{y_1,\ldots, y_r\})$ where $i < j$.
The fact that $i$ is not included in $\{y_1,\ldots, y_r\}$ implies there exists some $v\in \{y_1,\ldots, y_r\}$ such that $v < i$ and $e_v >e_i$.
The fact that $v$ is not a weak left-to-right maxima implies there exists some $u$ such that $u < v$ and $e_u >e_v$.
Now we have $e_u>e_v>e_i$ with $u<v<i<j$.
Thus, to avoid 3210, we must have $e_i \le e_j$.
Both directions are thus concluded.
\end{proof}
Now, we characterize 3201-avoiding inversion sequences.
\begin{lemma}\label{3201 characterization}
Let $(e_1, e_2,\ldots , e_n) \in {\bf I}_n$. For any $i \in [n]$, let $M_i^1$ and $M_i^2$ be the largest and second largest value among $\{e_1, e_2,\ldots , e_{i-1}\}$, respectively. Then $e \in {\bf I}_n(3201)$ if and only if for every $i \in [n]$, the entry $e_i$ is either a weak left-to-right maximum, a weak 2nd left-to-right maximum, or for every $j>i$, we have $e_j \le e_i$  or $e_j \ge M_i^2$.
\end{lemma}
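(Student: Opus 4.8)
The plan is to reduce everything to a local question at each position: for a fixed $i$, decide exactly when $e_i$ can play the role of the ``$0$'' of an occurrence of $3201$, i.e.\ when there exist $i_1<i_2<i<i_4$ with $e_{i_1}>e_{i_2}>e_{i_4}>e_i$. Since $e$ contains $3201$ precisely when some $i$ admits such a completion (take $i=i_3$), it suffices to characterize this, and I would split it into a \emph{prefix} part (existence of $i_1<i_2<i$ with $e_{i_1}>e_{i_2}>e_i$) and a \emph{suffix} part (existence of $i_4>i$ with $e_i<e_{i_4}<e_{i_2}$ for a suitably chosen prefix).

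For the prefix part I would show it is equivalent to ``$e_i$ is neither a weak left-to-right maximum nor a weak 2nd left-to-right maximum.'' This is essentially definitional and mirrors the argument in the proof of \cref{3210 characterization}: if $e_i$ is not a weak 2nd left-to-right maximum then, after deleting the weak left-to-right maxima, there is still a position $v<i$ with $e_v>e_i$; such a $v$ is not a weak left-to-right maximum, so there is $u<v$ with $e_u>e_v$, giving $e_u>e_v>e_i$. Conversely, any $i_1<i_2<i$ with $e_{i_1}>e_{i_2}>e_i$ exhibits $i_2$ as a non-weak-left-to-right-maximum position before $i$ whose value exceeds $e_i$, forcing $e_i$ to be neither kind of maximum.

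For the suffix part the point is to identify the largest value $T_i$ that the middle entry $e_{i_2}$ of a valid prefix can take; granting the prefix condition, a full $3201$ completion then exists if and only if some $j>i$ has $e_i<e_j<T_i$. This would give: $e$ avoids $3201$ iff for every $i$, either $e_i$ is a weak (or weak 2nd) left-to-right maximum, or every $j>i$ has $e_j\le e_i$ or $e_j\ge T_i$ — matching the statement with $T_i=M_i^2$. The two implications then fall out. For ``contains $\Rightarrow$ condition fails'': from an occurrence $e_{i_1}>e_{i_2}>e_{i_4}>e_{i_3}$, apply the analysis at $i=i_3$, noting $e_{i_2}\le M_i^2$ because $e_{i_1}>e_{i_2}$ with both entries occurring before $i$. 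For ``condition fails $\Rightarrow$ contains'': from a position $i$ violating the condition, the prefix step yields $u<v<i$ with $e_v$ maximal among admissible middle values, so $e_v=M_i^2$, and a violating $j$ then gives $e_u>e_v=M_i^2>e_j>e_i$ with $u<v<i<j$, a $3201$.

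I expect the main obstacle to be the suffix step: verifying that $T_i$, the largest admissible middle value, is really $M_i^2$. One has to keep track of where the record value $M_i^1$ and the runner-up $M_i^2$ first occur among $e_1,\dots,e_{i-1}$ relative to each other, since $e_{i_2}$ must lie strictly below some earlier entry — so $e_{i_2}=M_i^2$ is achievable exactly when $M_i^2$ is attained at a position that is not a weak left-to-right maximum. I would check this carefully against small inversion sequences; should the correct threshold turn out to be the largest value occurring at a non-weak-left-to-right-maximum position before $i$ (which agrees with $M_i^2$ in the generic case and, when $e_i$ is neither kind of maximum, necessarily exceeds $e_i$), the remaining arguments go through verbatim. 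The prefix step and both implication directions are routine once the threshold is pinned down.
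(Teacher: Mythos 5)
Your approach is the same as the paper's: the published proof also works position by position, showing in one direction that an occurrence $e_{p_1}>e_{p_2}>e_{p_4}>e_{p_3}$ of $3201$ violates the condition at $i=p_3$ (since $M_{p_3}^2\ge e_{p_2}>e_{p_4}>e_{p_3}$), and in the converse direction producing $u<v<i$ with $e_u>e_v=M_i^2>e_i$ whenever $e_i$ is neither kind of maximum. But the obstacle you flag in the suffix step is not a technicality: it is a genuine error in the lemma as stated. The paper's converse tacitly assumes that $M_i^2$ is attained at some position $v<i$ admitting $u<v$ with $e_u>e_v$; this fails exactly when every occurrence of $M_i^2$ before position $i$ is itself a weak left-to-right maximum (equivalently, precedes the first occurrence of $M_i^1$). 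Concretely, $e=(0,0,2,3,1,0,1)$ avoids $3201$ (an occurrence would require four distinct values, forcing the value $3$ to appear before the value $2$, but $3$ occurs only at position $4$ and $2$ only at position $3$), yet at $i=6$ the entry $e_6=0$ is neither a weak left-to-right maximum nor a weak 2nd left-to-right maximum, while $j=7$ gives $e_6=0<e_7=1<2=M_6^2$. So the stated condition is strictly stronger than $3201$-avoidance, and the ``only if'' direction of the lemma is false.

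Your proposed repair is the right one: replace $M_i^2$ by $V_i$, the largest value occurring before position $i$ at a position that is not a weak left-to-right maximum. When $e_i$ is neither kind of maximum, some such position $v$ has $e_v>e_i$, so $V_i>e_i$ is well defined and is realized at a $v$ admitting $u<v$ with $e_u>e_v$; a violating $j$ then yields $e_u>e_v=V_i>e_j>e_i$ with $u<v<i<j$, an occurrence of $3201$. Conversely, an occurrence at $p_1<p_2<p_3<p_4$ gives $V_{p_3}\ge e_{p_2}>e_{p_4}>e_{p_3}$ since $p_2$ is not a weak left-to-right maximum. With this change both directions of your argument close; note that the threshold $M_{z_j}^2$ used in the bijection of \cref{theorem: 3210 = 3201} would then need the same adjustment.
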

\begin{proof}
Let $e \in {\bf I}_n$ satisfy the conditions of \cref{3201 characterization} and, for the sake of contradiction, assume that there exists $i < j < k<\ell$ such that $e_k<e_{\ell} < e_j < e_i$. Notice that we have $M_k^1\ge e_i$ and thus $M_k^2\ge e_j$.
Then $e_k<e_{\ell}<e_j\leq M_k^2$, a contradiction to our assumption.

Conversely, suppose $e$ is 3201-avoiding. If $e_i$ is neither a weak left-to-right maximum nor a weak 2nd left-to-right maximum, then there exists some 2nd maximum value $M_i^2$ such that $M_i^2=e_v>e_i$ for some $v<i$. By definition of 2nd maximum, there is some maximum value $M_v^1=e_u>e_v$ for some $u<v$. To avoid 3201, we must have that for all $j>i$, $e_j\le e_i$ or $e_j\ge e_v=M_i^2$.
\end{proof}
Combining these two results allows us to prove $3210 \equiv 3201$.
\begin{theorem}\label{theorem: 3210 = 3201}
For $n \geq 1$, \[ |\I_n(3210)| = |\I_n(3201)|.\]
\end{theorem}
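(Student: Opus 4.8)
The plan is to build an explicit bijection $\Phi\colon\I_n(3210)\to\I_n(3201)$ by exploiting the common role played by the weak left-to-right maxima and the weak 2nd left-to-right maxima in \cref{3210 characterization} and \cref{3201 characterization}. Given $e\in\I_n$, call the positions that are weak left-to-right maxima or weak 2nd left-to-right maxima the \emph{scaffold} of $e$, and call the remaining positions $z_1<\cdots<z_m$ the \emph{third layer}; from the proof of \cref{3210 characterization}, $z_a$ lies in the third layer exactly when $e_{z_a}$ is strictly exceeded by two earlier entries, equivalently $e_{z_a}<M_{z_a}^2$ in the notation of \cref{3201 characterization}. First I would check that the scaffold, regarded as a set of positions together with the values of $e$ on those positions, is unchanged if the third-layer entries are replaced by any other values that remain strictly below the corresponding thresholds $c_a:=M_{z_a}^2$; in particular the scaffold, and the caps $c_1\le c_2\le\cdots\le c_m$ (weakly increasing because $M_i^2$ is nondecreasing in $i$), depend only on the scaffold data and not on the third-layer values. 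The map $\Phi$ will fix the scaffold and only rearrange the third layer.

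The next step is to translate the two avoidance conditions, for a fixed scaffold, into conditions on the third-layer tuple $(v_1,\dots,v_m)$ where $v_a=e_{z_a}$ and $0\le v_a<c_a$. Using the canonical partition in the proof of \cref{3210 characterization}, $e$ avoids $3210$ if and only if $v_1\le v_2\le\cdots\le v_m$. For $3201$, using \cref{3201 characterization} one checks that every later scaffold entry automatically satisfies the required inequality — a later weak left-to-right maximum is at least $M_{z_a}^1\ge c_a$, and a later weak 2nd left-to-right maximum at position $j$ is at least $M_j^2\ge M_{z_a}^2=c_a$ — so $e$ avoids $3201$ if and only if for all $a<b$ one has $v_b\le v_a$ or $v_b\ge c_a$. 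Both families of tuples are nonempty for every realizable scaffold (the all-zeros tuple works on the $3210$ side, and $v_a=c_a-1$ for all $a$ works on the $3201$ side), so the two avoidance classes realize exactly the same scaffolds. Hence it suffices to biject, for each fixed weakly increasing sequence of caps $c_1\le\cdots\le c_m$, the set $A$ of weakly increasing tuples with $v_a<c_a$ with the set $B$ of tuples satisfying $0\le v_a<c_a$ and $v_b\notin(v_a,c_a)$ for all $a<b$; summing such bijections over scaffolds produces $\Phi$ and yields $|\I_n(3210)|=|\I_n(3201)|$.

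The main obstacle is this inner bijection $A\leftrightarrow B$. When the caps are all equal it is simply reversal $(v_1,\dots,v_m)\mapsto(v_m,\dots,v_1)$, but for general nondecreasing caps reversal fails and one needs a genuine construction; I would proceed by induction on $m$, peeling off the first or last coordinate, the key point being that for a tuple in $B$ the previously chosen coordinates carve out a union of intervals $\bigcup_a(v_a,c_a)$ whose integer count can be described explicitly and matched against the number $c_m-v_{m-1}$ of extensions available on the $A$ side. Alternatively one could bypass an explicit inner map and show instead that $|A|$ and $|B|$ are computed by the same summation formula in the $c_a$. A secondary, more routine obstacle is the bookkeeping of the first step: verifying that altering third-layer values below their caps never disturbs which positions are weak left-to-right maxima or weak 2nd left-to-right maxima, and hence that $\Phi$, assembled from the per-scaffold bijections, is well defined on all of $\I_n(3210)$ and maps it bijectively onto $\I_n(3201)$.
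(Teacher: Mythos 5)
Your framework is the same as the paper's: fix the ``scaffold'' of weak left-to-right maxima and weak 2nd left-to-right maxima, reduce both avoidance conditions to conditions on the third-layer values $v_a=e_{z_a}$ relative to the caps $c_a=M_{z_a}^2$, and then biject the two families of admissible fillings. The difficulty is that you stop exactly where the paper's proof has its content. The paper's map is explicit: it keeps the \emph{multiset} $Z$ of third-layer values and redistributes it greedily, setting $f_{z_j}$ equal to the largest not-yet-used element of $Z$ that is less than $M_{z_j}^2$. The point is that the multiset is the right invariant: the weakly increasing arrangement of $Z$ is the unique $3210$-admissible arrangement of that multiset, the greedy arrangement is its $3201$-admissible image, and inverting amounts to extracting the multiset and re-sorting it. Your inner bijection between $A$ and $B$ --- which you explicitly flag as ``the main obstacle'' and for which you offer only two unexecuted plans --- is precisely this step, so the proposal as written is a correct setup plus a restatement of the remaining problem rather than a proof.

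There is also a false step in your reduction of the $3201$ condition. You assert that a later weak 2nd left-to-right maximum at position $j$ satisfies $e_j\ge M_j^2$, hence $e_j\ge c_a$. This fails: in $e=(0,1,2,3,4,1,0,2)$, position $6$ is a weak 2nd left-to-right maximum with $e_6=1<3=M_6^2$; moreover position $7$ is third-layer with cap $M_7^2=3$, and the later weak 2nd left-to-right maximum $e_8=2$ satisfies neither $e_8\le e_7=0$ nor $e_8\ge 3$. So later scaffold entries do not automatically satisfy the inequality in \cref{3201 characterization}, and your claimed equivalence ``$e$ avoids $3201$ iff for all $a<b$, $v_b\le v_a$ or $v_b\ge c_a$'' does not follow as you derive it. (This example avoids $3201$ yet violates the displayed condition of \cref{3201 characterization} at $i=7$, so the discrepancy sits between the lemma's condition and genuine $3201$-avoidance; either way, your reduction needs repair before the inner bijection can even be posed.)
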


\begin{proof}
We exhibit a bijection based on the characterizations in \cref{3210 characterization} and \cref{3201 characterization}.

Given $e \in {\bf I}_n(3210)$, we define $f \in {\bf I}_n(3201)$ as follows. Let $e_{x_1} \le e_{x_2} \le \cdots \le e_{x_t}$ and $e_{y_1} \le e_{y_2} \le\cdots \le e_{y_{r}}$ be the subsequences of
weak left-to-right maxima and weak 2nd left-to-right maxima of $e$, respectively, and let $e_{z_1} \le e_{z_2} \le\cdots \le e_{z_{n-t-r}}$ be the remaining entries.

For $i \in [t]$, we set $f_{x_i}$ = $e_{x_i}$
and for $i \in [r]$, we set $f_{y_i}$ = $e_{y_i}$. For each $j \in [n-t-r]$, we extract an element of the multiset
$Z = \{e_{z_1}, e_{z_2}
,\ldots, e_{z_{n-t-r}}\}$ and assign it to $f_{z_1}, f_{z_2}
,\ldots, f_{z_{n-t-r}}$, one at a time in order, as follows: $$f_{z_j} := \max\{k \mid k \in Z - \{f_{z_1}, f_{z_2}
,\ldots, f_{z_{j-1}}\} \text{ and } k < M_{z_j}^2\}.$$
By definition, $f$ will satisfy the characterization property in \cref{3201 characterization} of ${\bf I}_n(3201)$.
One can see that this is invertible, hence a bijection.
\end{proof}

A computer search proves no other length-4 patterns are Wilf equivalent, except possibly 3012 and the aforementioned Wilf equivalence class $3210\equiv 3201$.
We leave this last case as an open question.

\begin{conjecture}\label{conjecture: 3210 = 3012}
For $n \geq 1$, \[ |\I_n(3201)| = |\I_n(3012)|.\]
\end{conjecture}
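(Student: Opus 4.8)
Since \cref{theorem: 3210 = 3201} already establishes $|\I_n(3210)| = |\I_n(3201)|$, proving \cref{conjecture: 3210 = 3012} reduces to showing $|\I_n(3012)| = |\I_n(3201)|$, and the natural plan is to push the ``characterize, then biject'' strategy of \cref{3210 characterization,3201 characterization,theorem: 3210 = 3201} one pattern further. The starting observation is that an inversion sequence $e$ contains $3012$ precisely when it has a strictly increasing subsequence $e_j < e_k < e_\ell$ with $j < k < \ell$ together with some earlier entry $e_i > e_\ell$, $i < j$; equivalently, writing $p(\ell)$ for the least index whose entry exceeds $e_\ell$, such an occurrence ending at $\ell$ exists exactly when the entries strictly between positions $p(\ell)$ and $\ell$ that are smaller than $e_\ell$ fail to be weakly decreasing. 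Thus $e \in \I_n(3012)$ if and only if, for every $\ell$, this ``low window'' is weakly decreasing. The first task would be to convert this into a position-by-position statement in the spirit of \cref{3201 characterization} --- a condition on each $e_i$ relative to $M_i^1$, $M_i^2$, and the entries to its right --- that isolates a skeleton of $e$ (its weak left-to-right maxima, its weak 2nd left-to-right maxima, and the weakly decreasing structure lying beneath them, or some similar data) that can be shared with a canonical $3201$-avoiding sequence.

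Given such a characterization, I would define a map $\I_n(3012) \to \I_n(3201)$ fixing the skeleton and reassigning the remaining entries one at a time, largest available first, subject to the upper bound dictated by \cref{3201 characterization} --- mirroring the rule $f_{z_j} := \max\{k \in Z - \{f_{z_1},\dots,f_{z_{j-1}}\} : k < M_{z_j}^2\}$ from the proof of \cref{theorem: 3210 = 3201} --- and then establish invertibility directly from the greedy rule. Two natural alternatives present themselves: a generating-function attack in the style of Mansour and Shattuck, scanning $e$ from left to right while recording enough data (the top two prefix maxima, together with the shape of the currently active low windows) to obtain a functional equation for $\sum_n |\I_n(3012)| x^n$ and matching it against the analogous equation for $3201$ or $3210$; and a recursive transfer correspondence, building $e \in \I_n(3012)$ and its image in $\I_n(3201)$ in parallel and appending one entry at a time, maintaining a bijection between the sets of values by which each may legally be extended.

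The principal obstacle --- and the reason \cref{conjecture: 3210 = 3012} is still open --- is that, unlike $3210$, the pattern $3012$ is not equivalent to a purely order-theoretic (Dilworth-type) condition: $e$ avoids $3210$ iff it has no strictly decreasing length-$4$ subsequence, which is exactly \cref{3210 characterization}, whereas avoiding $3012$ genuinely couples a peak to an increasing triple beneath it, and the relevant low window depends on $p(\ell)$ in a way that does not localize cleanly. In particular there is no available reduction of the form ``$3\tau_1 \equiv 3\tau_2$ whenever $\tau_1 \equiv \tau_2$'': over inversion sequences the length-$3$ suffixes $012$ and $210$ are not Wilf equivalent (even though $210$ and $201$ are, by \cref{theorem: 210 = 201 on S-inversion}), so the equivalence $3012 \equiv 3201$ must come from a finer interaction with the constraints $e_i < i$. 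Concretely, in the recursive approach the set of values legally extending a $3012$-avoider can be a union of several disjoint intervals rather than a single initial segment of $\{0,1,\dots,n-1\}$ (as it is for $3210$), and the same holds for $3201$; the crux is to show these interval systems can be matched compatibly throughout the construction. Verifying that the greedy reassignment actually lands in $\I_n(3201)$ and is surjective --- or, in the recursive formulation, that the matching of interval systems can be maintained --- is where the real work lies; the fact that even $|\I_n(3210)|$ has no known closed form suggests that the generating-function route would require solving, not merely matching, the resulting functional equation.
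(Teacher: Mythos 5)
The statement you are addressing is not proved in the paper at all: it is stated as \cref{conjecture: 3210 = 3012}, explicitly left open, and verified only for $n \leq 12$. So there is no proof of record to compare against, and your submission is, by its own admission, not a proof either --- it is a research plan. Your one concrete mathematical claim is correct: an occurrence of $3012$ ending at position $\ell$ exists if and only if, taking $p(\ell)$ to be the least index with $e_{p(\ell)} > e_\ell$, the entries in positions strictly between $p(\ell)$ and $\ell$ with value less than $e_\ell$ contain a strictly increasing pair. But everything after that is conditional: you never produce the position-by-position characterization of $\I_n(3012)$ analogous to \cref{3201 characterization}, never define the skeleton-preserving map, and never verify that a greedy reassignment lands in $\I_n(3201)$ or is invertible. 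You name these as ``where the real work lies,'' which is accurate, but it means the argument has no content beyond the setup.

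Your diagnosis of the obstruction is reasonable and consistent with why the paper leaves this open: $3210$-avoidance is a Dilworth-type condition (no decreasing subsequence of length $4$, hence \cref{3210 characterization}), whereas $3012$-avoidance couples a large prefix entry to an increasing triple below it in a way that does not reduce to a partition into monotone subsequences, and there is no general transfer principle of the form $3\tau_1 \equiv 3\tau_2$ from $\tau_1 \equiv \tau_2$ (indeed $012 \not\equiv 210$ over inversion sequences). If you pursue this, the most promising of your three routes is probably the recursive transfer correspondence, since it sidesteps the need for a closed-form characterization; but as written, the submission establishes nothing and the statement remains a conjecture.
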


This has been verified for all $n \leq 12$.

\section{Enumeration of inversion sequences avoiding patterns of length 4}\label{section: enumeration 4}

Define a \emph{label-increasing tree} on $n$ vertices to be a rooted unordered tree in which each vertex is labeled with a distinct label from the set $\{0,\dots,n-1\}$ and labels increase along any path from the root to a leaf.
Then define a \emph{label-increasing tree with branching bounded by $k$} to be a label-increasing tree such that each vertex has at most $k$ children.
Let $L_{n,k}$ denote the set of $n$-vertex label-increasing trees with branching bounded by $k$.

Kuznetsov, Pak, and Postnikov \cite{kuznetsov1994increasing} showed that $L_{n,2}$ is in bijection with the \emph{up/down permutations}, that is, the permutations $\pi$ of $[n]$ such that $\pi_1 < \pi_2 > \pi_3 < \pi_4 > \cdots $; the number of up/down permutations is the Euler number $E_n$, whose exponential generating function is well-known, namely
\[ \sum_{n \geq 0} E_n \frac{x^n}{n!} = \tan(x)+\sec(x). \]
Corteel, Martinez, Savage, and Weselcouch \cite{corteel2016patterns} proved that $|\I_n(000)|=E_{n+1}$ via a bijection between $\I_n(000)$ and $L_{n+1,2}$.
We generalize their result to patterns $00\cdots 0$ of any length $k$.

\begin{theorem}\label{theorem: 00...0 label-increasing trees}
For $k\geq 1$, let $\pi=00\cdots0$ be the pattern consisting of $k$ zeros.
Then for all $n \geq 1$,
\[ |\I_n(\pi)| = |L_{n+1,k-1}|.\]
\end{theorem}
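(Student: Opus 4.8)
The plan is to generalize the bijection of Corteel--Martinez--Savage--Weselcouch between $\I_n(000)$ and $L_{n+1,2}$ to a bijection $\Phi : \I_n(\pi) \to L_{n+1,k-1}$, where $\pi = 00\cdots0$ has $k$ zeros. First I would recall how their bijection works and identify the feature that forces the branching bound: given an inversion sequence $e = e_1\cdots e_n$, one builds a tree on vertex set $\{0,1,\dots,n\}$ by processing positions $i = 1, \dots, n$ in order and attaching vertex $i$ as a child of the vertex determined by $e_i$ (for instance, the most recently added vertex currently labeled $e_i$, or the unique such vertex under an appropriate invariant). Since labels increase from root to leaf, I would set up the invariant that at each stage the vertices available to be a parent with a given label $v$ are controlled, and the key point is: vertex $i$ gets attached under a vertex whose label equals $e_i$, and the number of positions $j \le i$ with $e_j = e_i$ is exactly one more than the number of children that vertex has accumulated among $\{1,\dots,i-1\}$... so that the multiplicity of the value $v$ among $e_1,\dots,e_n$ equals the number of children of the corresponding vertex. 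Thus a vertex has at most $k-1$ children if and only if no value is repeated $k$ times among the $e_i$, and I must show this repetition condition is equivalent to avoiding $\pi = 00\cdots0$ ($k$ zeros).

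The main combinatorial lemma is therefore: \emph{an inversion sequence $e$ avoids the pattern $00\cdots0$ (length $k$) if and only if no integer appears $k$ or more times in $e$.} One direction is immediate: $k$ equal entries form the pattern $00\cdots0$. For the converse, I would argue that any occurrence of the pattern $00\cdots0$ is literally a choice of $k$ indices $i_1 < \cdots < i_k$ with $e_{i_1} = \cdots = e_{i_k}$ (since the pattern has all entries equal, order-isomorphism forces literal equality), so containing the pattern is the same as some value occurring at least $k$ times. This lemma is short but it is the conceptual crux that replaces the "at most 2 children" (i.e.\ "no value thrice") condition in the base case $k=3$.

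With the lemma in hand, I would verify that $\Phi$ is well-defined (lands in $L_{n+1,k-1}$: the tree is label-increasing by construction, has $n+1$ vertices labeled $0,\dots,n$, and by the multiplicity correspondence every vertex has at most $k-1$ children) and that it is a bijection, by exhibiting the inverse: given a tree $T \in L_{n+1,k-1}$, recover $e$ by a canonical traversal — reading off, for each $i = 1,\dots,n$, the label of the parent of vertex $i$ — and checking this yields a valid inversion sequence ($0 \le e_i < i$ because the parent of $i$ has a smaller label, and that label is at most $i-1$; more care is needed to see the range is exactly $[0,i-1]$, which follows from the same invariant used in the forward direction, e.g.\ every label in $\{0,\dots,i-1\}$ is already present as a vertex when $i$ is processed). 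I expect the main obstacle to be stating and maintaining the invariant cleanly enough that both $\Phi$ and $\Phi^{-1}$ are manifestly well-defined and mutually inverse — in particular pinning down \emph{which} vertex labeled $e_i$ receives child $i$ when there could be several candidates, and confirming that this choice is exactly what makes the child-count equal the value-multiplicity. Once the $k=3$ case of Corteel et al.\ is understood as the template, the generalization is structural, so I would present it by carefully restating their construction with the parameter $k$ and then invoking the multiplicity lemma; the enumeration $|\I_n(\pi)| = |L_{n+1,k-1}|$ is then immediate.
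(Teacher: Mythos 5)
Your proposal is correct and takes essentially the same approach as the paper: both rest on the observation that avoiding $00\cdots0$ is the same as no value occurring $k$ or more times, paired with the bijection $e_i = $ (label of the parent of vertex $i$). The ambiguity you worry about --- which vertex labeled $e_i$ receives child $i$ --- does not actually arise, since the $n+1$ vertices carry \emph{distinct} labels $0,\dots,n$, which is why the paper can state the bijection in a single line without any incremental construction or invariant.
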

\begin{proof}
Notice that $\I_n(\pi)$ is the set of inversion sequences of length $n$ where each entry occurs at most $k-1$ times, and $L_{n+1,k-1}$ is the set of label-increasing trees of $n+1$ vertices labeled $0,\dots,n$, with branching bounded by $k$.
Then it is easy to see that the mapping sending $T\in L_{n+1,k-1}$ to $e \in \I_n(\pi)$, where $e_i$ is the parent of $i$ in $T$, is a bijection between $L_{n+1,k-1}$ and $\I_n(\pi)$.
\end{proof}

\cref{theorem: 00...0 label-increasing trees} implies $\I_n(0000)$ is in bijection with the label-increasing trees with branching bounded by 3, which is OEIS sequence A297196 \cite{sloane2018line}.
\cref{theorem: 00...0 label-increasing trees} also enables us to determine the exponential generating function for $|\I_n(00\cdots0)|$, as Riordan \cite{riordan1979forests} showed the exponential generating function
\begin{align}\label{eq: T_k(x) definition}
    T_k(x) = \sum_{n \geq 0} |L_{n,k}| \frac{x^n}{n!}
\end{align} 
satisfies the differential equation
\[ T'_k(x) = \sum_{i=0}^k \frac{(T_k(x)-1)^i}{i!}.\]
In other words, $T_k(x)$ satisfies $T_k(0)=1$ and 
\[ k!\,T_k'(x)=(T_k(x))^k+\sum_{m=0}^{k-2} c_{m,k}(T_k(x))^m,\]
where
\[ c_{m,k}=\frac{1}{m!}\left(\sum_{j=0}^{k-m}(-1)^{j}k(k-1)\cdots(j+1)\right).\]
These are the same coefficients satisfying 
\[ k! \sum_{j=0}^k\frac{x^j}{j!}=(x+1)^k+\sum_{m=0}^{k-2}c_{m,k}(x+1)^m \]
coming from the differential equation.

Let $L_{n,k}'$ denote the set of $n$-vertex label-increasing trees with unbounded root degree and branching bounded by $k$ at all other nodes. An alternative way to think about these combinatorial objects is to consider the possible ways how $n$ sufficiently large boxes can contain each other under the condition that each box may contain at most $k$ (themselves possibly nested) boxes.
Similar to \cref{theorem: 00...0 label-increasing trees}, we have the following result for patterns of the form $011\cdots1$.

\begin{theorem}\label{theorem: 01...1 label-increasing trees}
For $k\geq 1$, let $\pi=011\cdots1$ be the pattern consisting of a zero and $k$ ones.
Then for all $n \geq 1$,
\[ |\I_n(\pi)| = |L_{n+1,k-1}'|.\]
\end{theorem}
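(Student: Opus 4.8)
The plan is to reuse the parent‑map bijection from the proof of \cref{theorem: 00...0 label-increasing trees}: given $e\in\I_n$, build a rooted tree $T$ on vertex set $\{0,1,\dots,n\}$, rooted at $0$, in which $e_i$ is declared the parent of $i$ for each $i\in[n]$. Since $e_i<i$, labels strictly increase along every root‑to‑leaf path, and the children of a vertex $v$ are exactly the indices $i$ with $e_i=v$. Thus this map is a bijection between $\I_n$ and the set of $(n+1)$‑vertex label‑increasing trees, and it restricts to a bijection onto $L_{n+1,k-1}'$ precisely when $e$ ranges over the inversion sequences in which every positive value occurs at most $k-1$ times (the value $0$, corresponding to the root, being unrestricted). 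So the theorem reduces to the claim that $e$ avoids $\pi=0\,1\cdots1$ (with $k$ ones) if and only if every positive value occurs at most $k-1$ times in $e$.

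For the forward direction, observe that a length‑$(k+1)$ subsequence $a_0a_1\cdots a_k$ is order isomorphic to $\pi$ exactly when $a_0<a_1=a_2=\cdots=a_k$; in particular the $k$ equal entries must be positive, since no value is strictly smaller than $0$. Hence if $e$ contains $\pi$, some positive value occurs at least $k$ times. For the converse --- the one point that requires a small idea --- suppose a value $v\ge 1$ occurs in $e$ at positions $p_1<p_2<\cdots<p_m$ with $m\ge k$. Because $e_{p_1}=v<p_1$ and $v\ge 1$, we have $p_1\ge 2$, so position $1$ precedes $p_1$; and $e_1=0<v$. Therefore $e_1\,e_{p_1}e_{p_2}\cdots e_{p_k}\sim 0\,1\cdots1=\pi$, so $e$ contains $\pi$. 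This establishes the claim, and hence, via the parent map, the theorem.

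I do not expect a serious obstacle here: the combinatorial bookkeeping is identical to that of \cref{theorem: 00...0 label-increasing trees}, and the only genuinely new step is the converse above. Its content is the elementary observation that $e_1=0$ is always available to play the role of the leading $0$ of $\pi$, so that $\pi$‑avoidance collapses to a multiplicity bound on each positive value; this is exactly the place where the inversion‑sequence constraint $0\le e_i<i$ (forcing $e_1=0$ and forcing the first occurrence of any positive value to lie in position $\ge 2$) is used, and it is the step I would write out most carefully.
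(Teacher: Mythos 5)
Your proof is correct and follows the same route as the paper: both identify $\I_n(\pi)$ with the inversion sequences in which each positive value occurs at most $k-1$ times and then apply the parent-map bijection with label-increasing trees. The paper simply asserts this identification as "easy to see," whereas you spell out the two directions (including the observation that $e_1=0$ always supplies the leading $0$ of the pattern), which is exactly the detail worth writing out.
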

\begin{proof}
Notice that $\I_n(\pi)$ is the set of inversion sequences of length $n$ where each entry except $0$ occurs at most $k-1$ times, and $L_{n+1,k-1}'$ is the set of label-increasing trees of $n+1$ vertices labeled $0,1,\dots,n$, with branching bounded by $k$ except at the root.
Then it is easy to see that the mapping sending $T\in L_{n+1,k-1}'$ to $e \in \I_n(\pi)$, where $e_i$ is the parent of $i$ in $T$, is a bijection between $L_{n+1,k-1}'$ and $\I_n(\pi)$.
\end{proof}
\cref{theorem: 01...1 label-increasing trees} implies $\I_n(0111)$ is in bijection with the label-increasing trees (of unbounded root degree) with branching bounded by 2, which is OEIS sequence A000772 \cite{sloane2018line}.

More generally, it is well-established that $|L_{n,k}'|$ equals $D^n(\exp(x))$ evaluated at $x=0$, where the operator $D$ is defined by
\[ D = \left(\sum_{j=0}^k\frac{x^j}{j!}\right)\frac{d}{dx}. \]
Therefore, we have the general formulae of exponential generating function
\[ R_{k}(x):=\sum_{n\ge 0 }|L_{n,k}'|\frac{x^n}{n!}=\exp\big(T_k(x)-1\big), \]
where $T_k(x)$ is defined above in \cref{eq: T_k(x) definition}.
When $k=1$, the exponential generating function is $R_1(x)=\exp(\exp(x)-1)$, whose coefficients yield OEIS sequence A000110 \cite{sloane2018line}.
When $k=2$, the exponential generating function is
\[ R_2(x)=\exp\big(\tan(x) + \sec(x) - 1\big). \]
It is hard to explicitly write down $R_3(x)$, whose coefficients form OEIS sequence A094198.

Next, we present the following conjecture.

\begin{conjecture}
\label{conjecture: 0021 binomial transform}
Let $A_n=|\I_n(0021)|$. We have that
$A(x)=\sum_{n\ge 1}A_nx^n$ satisfies $$\frac{1} {\big(1 - A(x)\big)\big(1+A(x)\big)^2}  =  1 - x.$$
\end{conjecture}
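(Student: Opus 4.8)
The plan is to attack \cref{conjecture: 0021 binomial transform} by first finding a combinatorial decomposition of $\I_n(0021)$ refined by a natural statistic, translating the recursion into a functional equation for a bivariate generating function, and then specializing. The displayed identity $\frac{1}{(1-A(x))(1+A(x))^2} = 1-x$ is equivalent, after clearing denominators, to a cubic polynomial relation $A^3 + A^2 - A - 1 = \frac{-x}{?}\cdots$ — more precisely, writing $B = A$, one gets $(1-B)(1+B)^2 = \frac{1}{1-x}$, i.e. $1 + B - B^2 - B^3 = \frac{1}{1-x}$. So the first concrete step I would take is to recast the target as the algebraic equation $B^3 + B^2 - B + \big(\tfrac{1}{1-x} - 1\big) = 0$, equivalently $(1-x)(B^3+B^2-B-1) = -x$, and aim to produce a combinatorial recursion whose generating function manipulation yields exactly this cubic.

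The key structural step is to understand the shape of a 0021-avoiding inversion sequence. A pattern $0021$ occurs when there are positions $i<j<k<\ell$ with $e_i = e_j < e_\ell < e_k$; equivalently, avoiding $0021$ means: whenever a value $v$ appears at least twice, say at positions $i<j$, there is no later "descent pair" $e_k > e_\ell > v$ with $j < \ell < k$. I would try to encode this by tracking, as one appends entries $e_1, e_2, \dots$ one at a time, the set of "repeated-so-far" values together with the current maximum, since a new entry $e_i$ is constrained precisely by whether it creates a forbidden configuration relative to previously repeated values. A cleaner route may be to use \cref{theorem: 0 initial positive pattern S-inversion summation}: since $0021$ has the form $0\cdot\pi'$ with $\pi' = 021$ not all positive, that theorem does not directly apply, but the pattern $0021$ begins $00$, so I would instead isolate the positions equal to $0$ and those equal to the overall first repeated value, leading to a layered/iterated structure that the $(1+A)^2$ factor (two "marked layers") and the $(1-A)^{-1}$ factor (a sequence of further layers) both hint at. Concretely, I conjecture that $\I(0021)$ decomposes as: a sequence of 0-blocks interleaved with a "core" that is itself built from two distinguished weakly-increasing runs over an independent copy of the class — the precise bookkeeping is what needs to be nailed down.

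From such a decomposition I would extract a polynomial functional equation for $A(x)$ (or a bivariate refinement $A(x,u)$ with $u$ marking, say, the number of distinct values or the number of left-to-right maxima), solve the bivariate equation via the kernel method if needed, set $u=1$, and check it matches $(1-x)(B^3+B^2-B-1) = -x$. Throughout I can cross-check low-order coefficients against the known values of $|\I_n(0021)|$ from the OEIS.

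I expect the main obstacle to be the decomposition step itself: finding the exactly-right combinatorial grammar for $0021$-avoidance that produces a cubic rather than a messier algebraic or non-algebraic equation. In particular, controlling the interaction between repeated values and later descents tends to require tracking both the maximum and the "second structure" (reminiscent of the weak 2nd left-to-right maxima in \cref{3201 characterization}), so the natural first attempt gives a bivariate equation whose kernel must be handled carefully; showing the resulting algebraic function collapses to the stated cubic is where the real work lies. An alternative, less illuminating fallback would be to guess the bivariate equation directly from OEIS data, verify it satisfies the cubic, and then prove the recursion it encodes bijectively — but since the conjecture is explicitly left open in the paper, I would be honest that I do not currently see how to close the decomposition argument, and would present the above as the route I believe is most promising rather than a complete proof.
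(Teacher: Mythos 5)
This statement is left unproved in the paper: it is stated as a conjecture, verified only for $n\le 11$, and \cref{remark:0021 solved} notes that the proof was supplied only later by Chern--Fu--Lin and by Mansour. So there is no proof of the paper's to compare against, and your submission is likewise not a proof --- it is a research plan whose central step, the combinatorial decomposition of $\I_n(0021)$ that would produce the cubic functional equation, is never constructed. You candidly say so yourself, but to be explicit about why the gap is genuine: every subsequent step (the bivariate refinement, the kernel method, the specialization $u=1$) is conditional on a ``grammar'' for $0021$-avoidance that you only conjecture exists in the shape suggested by the factors $(1+A)^2$ and $(1-A)^{-1}$, and you give no candidate statistic that is actually shown to be tracked consistently under appending an entry. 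Reading the factorization of the answer backwards into a decomposition is a reasonable heuristic, but it is not evidence that such a decomposition exists, and the eventual published proofs required substantially more machinery than this sketch anticipates.

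Two smaller but concrete errors. First, your ``equivalent'' polynomial form is wrong: from $1+A-A^2-A^3=\tfrac{1}{1-x}$ one gets $(1-x)(A^3+A^2-A)=-x$, not $(1-x)(A^3+A^2-A-1)=-x$ (the latter equals $-1$ identically under the hypothesis). Since you propose to verify your eventual functional equation against this target, the slip would send the verification astray. Second, your restatement of $0021$-containment is internally inconsistent: having correctly required positions $i<j<k<\ell$ with $e_i=e_j<e_\ell<e_k$, you then describe the forbidden configuration as a descent pair with ``$j<\ell<k$,'' reversing the order of the last two positions. These are fixable, but in a problem where the entire difficulty is getting the combinatorics exactly right, they matter.
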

In other words, $|\I_n(0021)|$ corresponds to the OEIS sequence A218225 \cite{sloane2018line}.
This has been verified for all $n \leq 11$.

\begin{remark}\label{remark:0021 solved}
Since the writing of the original version of this paper, \cref{conjecture: 0021 binomial transform} has been simultaneously proven by Chern, Fu, and Lin \cite{chern2022burstein} and Mansour \cite{mansour2022generating}.
\end{remark}

\section*{Acknowledgments}
The research was conducted at the 2021 University of Minnesota Duluth REU (NSF--DMS Grant 1949884 and NSA Grant H98230-20-1-0009) and fully supported by the generosity of the CYAN Mathematics Undergraduate Activities Fund.
The authors are deeply thankful to Professor Joseph Gallian for his long-lasting dedication in running the wonderful program.
The authors are also grateful to Amanda Burcroff for her editing feedback.
Lastly, we thank the anonymous referees who made many suggestions improving this article.

\bibliographystyle{plain}
\bibliography{ref}

\end{document}